\newcommand{\numberset}{\mathbb}
\newcommand{\R}{\numberset{R}}
\newcommand{\K}{\numberset{K}}
\newcommand{\M}{\numberset{M}}
\newcommand{\gd}{\mathcal{GRAD}}
\newcommand{\did}{\mathcal{DIV}}
\renewcommand{\epsilon}{\varepsilon}
\renewcommand{\theta}{\vartheta}
\renewcommand{\rho}{\varrho}
\renewcommand{\phi}{\varphi}
{\left\lbrace\begin{array}{@{}l@{}}}%
{\end{array}\right.}
\theoremstyle{definition}
\theoremstyle{remark}
\newtheorem{test}{Test}[section]
\theoremstyle{remark}
\newtheorem{osservazione}{Remark}[section]
\theoremstyle{plain}
\newtheorem{teorema}{Theorem}[section]
\newtheorem{lemma}{Lemma}[section]
\author[1]{L. Beir\~ao da Veiga \thanks{lourenco.beirao@unimib.it}}
\author[2]{L. Lopez  \thanks{luciano.lopez@uniba.it}}
\author[1]{G. Vacca  \thanks{giuseppe.vacca@unimib.it}}
\affil[1]{Dipartimento di Matematica e Applicazioni,  Universit\`a degli Studi di Milan-Bicocca, Via Roberto Cozzi, 55 - 20125}
\affil[2]{Dipartimento di Matematica,  Universit\`a degli Studi di Bari, Via Edoardo Orabona, 4 - 70125 Bari}
\title{\textbf{Mimetic Finite Difference methods for Hamiltonian wave equations in 2D}}
\date{\today}
\begin{document}
\maketitle

\begin{abstract}
In this paper we consider the numerical solution of the Hamiltonian wave equation in two spatial dimensions. We construct a two step procedure in which we first discretize the space by the Mimetic Finite Difference (MFD) method and then we employ a standard  symplectic scheme to integrate the semi-discrete
Hamiltonian system derived. The main characteristic of the MFD methods, when applied to stationary problems, is to mimic important properties of the continuous system. This approach yields a full numerical procedure suitable to integrate
Hamiltonian problems. A complete theoretical analysis of the method and some numerical simulations are developed in the paper.
\end{abstract}

\section{Introduction}

Because of the symplectic structures, Hamiltonian partial differential equations (PDEs) are used to give a mathematical representation of many physical systems and are of interest to various applicative fields, see for instance quantum field theory, meteorology, nonlinear optics, weather forecast.

An important requirement that any numerical method for Hamiltonian PDEs has to satisfy is the preservation of the intrinsic geometric properties of the original continuous problem. In particular, the numerical procedure should preserve the symplectic  structure of the Hamiltonian system during numerical simulations.  A standard procedure to derive a suitable method for an infinite-dimensional Hamiltonian PDE consists into two steps: in the first one the system is discretized in space in order to obtain a finite-dimensional Hamiltonian system, and then the semi-discretized system is solved in time by a symplectic integrator \cite{MR1904823, MR2657217, MR2132573, gawlik2011, gawlik2014, demoures2015}. There exists also a recent approach in which the space and time are considered on equal footing, this approach requires a multi-symplectic formulation of the system and leads to the multi-symplectic numerical schemes for the numerical solution of the PDEs (see \cite{MR1854689, MR2220764, MR2222808,  marsden1998}).

The effectiveness of this approach is ensured by the property that the derived semi-discrete system is a finite-dimensional Hamiltonian
system of ordinary differential equations (ODEs).
The space discretization of a Hamiltonian system is usually performed by one of the following techniques: finite difference methods, finite element methods, spectral methods, pseudospectral methods, Fourier expansion,  wavelet based methods (see for instance \cite{MR1472194, MR1997061,  MR2211034, MR2425152, MR2586202}). However, these semi-discretization  approaches could become very expensive or could not be applicable when the space dimension $d$ is greater than $d=1$.

Instead, in this paper we  consider the Mimetic Finite Difference (MFD) method to approximate the continuous problem combined with a standard symplectic integration in time to integrate the derived semi-discrete Hamiltonian system.

The main results about MFD methods, for stationary problems, can be found in the recent book \cite{BLM11book} and papers \cite{MR3133437, MR2192322} where, in particular, the theoretical framework of the mimetic spaces and the discretization of the operators are introduced. Significative applications of  MFD methods may be found for instance in \cite{MR2339994, MR2512497, MR2534128, MR2568590, MR2655949, MR2737630}. Among the first publication in this field it is worth mentioning \cite{MR1383592, MR1426277} where a first approach to mimetic discretization of the continuous operators can be found and the fundamental papers \cite{MR2168945, MR3133438} where the modern approach to MFD was introduced. A generalization of the MFD methods has been recently proposed, the \textit{virtual element methods} (VEMs); we cite \cite{MR3073346,  MR2997471, VEM-helmholtz, vaccabis, vaccahyper, vaccadivfree} as a very short representative list.

Recently in \cite{vacca}, MDF methods has been applied to the space discretization of PDEs of parabolic type in two dimension, showing how this technique preserves invariants of the solution better than classical space discretizations such as finite difference methods.

The main characteristic of the MFD methods 
is to mimic important properties of the continuous system, e.g., conservation laws, symmetry and positivity
of the solutions, and the most important properties of the continuous differential operators,
including duality and self-adjointness relations. Furthermore  MFD methods can be applied
for general polygonal and polyhedral meshes of the space domain instead of more standard triangular/quadrilateral grids.

The main novelty of this paper is the use of  MFD methods for the space discretization of the nonlinear wave equation in 2D coupled with a standard symplectic method (the implicit midpoint scheme) for the time integration. We derive a full numerical discretization procedure which will exploit the conservative properties of the MFD approach associated to the symplectic features of the time integrator.  We  show that the {\em mimetic} semi-discrete Hamiltonian  is preserved in time and we  derive the conservation law for the {\em mimetic} semi-discrete energy. Furthermore we give a bound for the conservation of the full discretized Hamiltonian and for the conservation of the full discretized energy. We also prove the convergence of the semi-discrete and fully discrete solutions to the solution of the original problem

The paper is organized in the following way. In Section \ref{sec:2} we recall the basic elements of the MFD approach.
In  Section \ref{sec:3} we recall the mathematical form of the Hamiltonian PDE we wish to study. In Section \ref{sec:4} we apply the MFD method to the continuous problem and we give a result of the convergence of the semi-discrete solution to the continuous solution of the original problem; we define the semi-discrete Hamiltonian and energy density, show their conservation laws. In  Section \ref{sec:5} we discretize the semi-discrete system by using a symplectic time integrator, the implicit midpoint rule, of the second order in time. We will prove the convergence of the full discrete numerical solution by providing an error estimate of the second order in space and time. Hence we give a result about the conservation of the discrete Hamiltonian and of the discrete energy of the system. Section \ref{sec:6} is devoted to show some numerical results.


\section{Background on Mimetic Finite Differences Methods}
\label{sec:2}
In this section, for ease of reading,  we recall the basic concepts and notations on MFD methods which will be used to discretize PDEs in the spatial domain $\Omega \subseteq \R^2$ where we assume $\Omega$ bounded polygon. 
For more details on this subject we  refer the interested reader to the recent book \cite{BLM11book} or  to the papers \cite{MR2192322, MR3133437, vacca}.  
Let $\omega$ a measurable subset of the domain $\Omega$  and let $\K \in (L^{\infty}(\Omega))^{2 \times 2}$ a full symmetric positive definite tensor.
By making use of standard notation, we consider  the following scalar products:
\begin{align}
\label{eq:psh0}
(u, \,  v)_{L^2(\omega)} &:= \int_{\omega} u \, v \, {\rm d}x \qquad \text{for all $u,\, v \in L^2(\omega)$},  \\
\label{eq:psH}
( \boldsymbol{\omega}, \,  \boldsymbol{\sigma})_{\K, \omega} &:= \int_{\omega} \K^{-1} \boldsymbol{\omega} \cdot  \boldsymbol{\sigma} \, {\rm d}x \qquad \text{for all $\boldsymbol{\omega}, \,  \boldsymbol{\sigma} \in (L^2(\omega))^2$}. 
\end{align}
It is clear that, in the sense of distribution,
\begin{displaymath}
(\K \nabla u, \, \boldsymbol{\sigma})_{\K, \Omega} = - (u, \, {\rm div} \boldsymbol{\sigma})_{L^2(\Omega)} \qquad \text{for all $u \in L^2(\Omega)$, $\boldsymbol{\sigma} \in H({\rm div}, \Omega)$}
\end{displaymath}
thus we get the duality relation with respect to the scalar product  \eqref{eq:psh0} and  \eqref{eq:psH}
\begin{equation}
\label{eq:dualit1}
\K \nabla = -({\rm div})^*.
\end{equation}


 Let $\mathcal{T}_h$ be an unstructured mesh of $\Omega$ into nonoverlapping simply-connected polygons with flat faces, where
\begin{displaymath}
h := \sup_{c \in \mathcal{T}_h} \text{diameter$(c)$}.
\end{displaymath}
Let $\mathcal{E}_h$ be the set of edges of the polygons in $\mathcal{T}_h$.  We use the following notations for the mesh objects: 
 $c \in \mathcal{T}_h$ denotes a general cell in the mesh with measure $|c|$ and centroid $\mathbf{x}_c$; 
$f \in \mathcal{E}_h$ denotes a general edge of the cell $c$ with measure $|f|$ and centroid $\mathbf{x}_f$;
$\mathbf{n}_f$ indicates the unit normal vector to the edge $f$ with preassigned direction;
 $\alpha_{c,f} = \pm 1$  represents the mutual orientation of the vector $\mathbf{n}_f$ and the outward normal vector to $f$ with respect to the cell $c$. 

Moreover, let $\mathcal{Y}_h = \mathcal{T}_h, \mathcal{E}_h$, and let $\sigma = c, f$, then we denote with $\mathcal{Y}_h(\sigma)$ the subset of $\mathcal{Y}_h$ of all the elements that are related with $\sigma$, and we indicate with $|\mathcal{Y}_h(\sigma)|$ the cardinality of this set. For example $\mathcal{T}_h(f)$ denotes all cells sharing face $f$ and $\mathcal{E}_h(c)$ denotes all faces forming the boundary of cell $c$. 

In the following we take on the element $c \in \mathcal{T}_h$ the shape regularity assumptions listed, for instance, in \cite{BLM11book, MR2192322}. A possibility is to assume that for all $h$, each element $c$ in $\mathcal{T}_h$ satisfies: 
\begin{itemize}
\item[\textbf{(M1)}] $c$ is star-shaped with respect to a ball of radius greater then $\gamma \, h_c$,
\item[\textbf{(M2)}] any two vertexes in  $c$ are at least $\sigma \, h_K$ apart, 
\end{itemize}
where $h_c$ is the diameter of $c$. The constants $\gamma$ and $\sigma$ are positive and uniform with respect to the mesh family.

The mesh objects will define the degrees of freedom of the discrete system, that is these will define the space of the \textbf{discrete pressures} and  \textbf{discrete fluxes}.

Let
\begin{displaymath}
N_c := |\mathcal{T}_h|, \qquad N_f := |\mathcal{E}_h|,  \qquad N^* := \max_{c}|\mathcal{E}_h(c)|.
\end{displaymath}
Let $\mathcal{C}_h$ be the set of the  pressures that are piecewise constant on $\mathcal{T}_h$, i.e.
\begin{displaymath}
\mathcal{C}_h := \set{u \in L^2(\Omega) | u_{|c} = {\rm const}, \quad \forall c \in \mathcal{T}_h}.
\end{displaymath}
Given a pressure $u \in L^2(\Omega)$,  we define  the \textbf{interpolant discrete pressure} $u^I\in \mathcal{C}_h$  with
\begin{displaymath}
u_{|c}^I = \frac{1}{|c|} \int_c u \, {\rm d}c, \qquad \text{for all c $\in \mathcal{T}_h$.}
\end{displaymath}
The space $\mathcal{F}_h$  of the discrete velocities  is defined as follows. For all edge $f\in \mathcal{E}_h$ we associate a real number $\omega_f$ and we denote with $\boldsymbol{\omega}_h$ the vector with components given by the collection of all the $\{ \omega_f\}_{f \in \mathcal{E}_h}$. The symbol $\mathcal{F}_h$ will represent  the vector space of all  $\boldsymbol{\omega}_h$. Let $\boldsymbol{\omega} \in H({\rm div} ,c)$ a vector function, and let us assume that all face-integrals 
\begin{displaymath}
\int_f \boldsymbol{\omega}\cdot \mathbf{n}_f\, {\rm d}S, \qquad \text{for all $f \in \mathcal{E}_h$}
\end{displaymath}
exist. Then the \textbf{interpolant discrete flux}  of $\boldsymbol{\omega}$ in the space  $\mathcal{F}_h$ is defined by $\boldsymbol{\omega}^I:= (\omega_f)_{f \in \mathcal{E}_h}$ with
\begin{displaymath}
\omega_f  = \frac{1}{|f|} \int_f \boldsymbol{\omega}\cdot \mathbf{n}_f\, {\rm d}S, \qquad \text{for all $f \in \mathcal{E}_h$.}
\end{displaymath}

\begin{osservazione}
\label{oss:0}
The discrete spaces $\mathcal{C}_h$, $\mathcal{F}_h$ and the interpolation operators are defined starting from the degrees of freedom:
\begin{itemize}
\item $\frac{1}{|c|} \int_c u \, {\rm d}c$, \qquad \text{for all $c \in \mathcal{T}_h$, and $u \in L^2(\Omega)$,} 
\item $\frac{1}{|f|} \int_f \boldsymbol{\omega} \cdot \mathbf{n}_f \, {\rm d}S$, \qquad \text{for all $f \in \mathcal{E}_h(c)$, and $\boldsymbol{\omega} \in H({\rm div} , c)$.}
\end{itemize}
\end{osservazione}

\begin{osservazione}
\label{oss:1}
There are obvious correspondences:
\begin{displaymath}
\mathcal{C}_h \cong \R^{N_c}  \qquad u \mapsto (u_{c})_{c \in \mathcal{T}_h}, \qquad \text{and} \qquad
\mathcal{F}_h \cong \R^{N_f}  \qquad \boldsymbol{\omega} \mapsto (\omega_{f})_{f \in \mathcal{E}_h}.
\end{displaymath}
With a slight abuse of notation we can refer to a function in the discrete functional spaces as a vector and vice versa. 
\end{osservazione}

The definition of the mimetic scheme   carries on with the discretisation of the differential operators. Let $\boldsymbol{\omega} \in H({\rm div} , c)$ with $c \in \mathcal{T}_h$, then the Divergence Theorem states that
\begin{displaymath}
\int_{c} {\rm div} \boldsymbol{\omega} \, {\rm d}x = \int_{\partial c} \boldsymbol{\omega} \cdot \mathbf{n} \, {\rm d}S\ ,
\end{displaymath} 
where $\mathbf{n}$ is the unit outward normal to $\partial c$. Therefore, the continuous operator ${\rm div}$ admits the immediate discretisation  
${\did} \colon \mathcal{F}_h \to \mathcal{C}_h, $ with
\begin{displaymath}
({\did}\boldsymbol{\omega}_h)_c = \frac{1}{|c|} \sum_{f \in \mathcal{E}_h(c)} \alpha_{c,f} |f| \omega_f \quad \forall  \, \boldsymbol{\omega}_h\in \mathcal{F}_h.
\end{displaymath}
The operator ${\did}$ is called \textbf{discrete primary operator}.

The next step in the construction of the MFD method is the definition of suitable inner products on the discrete functional spaces  $\mathcal{C}_h$ and $\mathcal{F}_h$ that allow to construct the derived operators imposing the duality relations for the discrete operators. 

We assume, for the moment, the following scalar products on the vector spaces  $\mathcal{C}_h$ and  $\mathcal{F}_h$:
\begin{gather}
\label{eq:proc}  
 [u_h, v_h]_{\mathcal{C}_h}                   := u_h^T \M_{\mathcal{C}_h}v_h                \qquad \text{for all $u_h, v _h\in \mathcal{C}_h$,} \\
\label{eq:prof}
 [\boldsymbol{\omega}_h, \boldsymbol{\sigma}_h]_{\mathcal{F}_h} := \boldsymbol{\omega}_h^T \M_{\mathcal{F}_h}\boldsymbol{\sigma}_h \qquad \text{for all $\boldsymbol{\omega}_h, \boldsymbol{\sigma}_h \in \mathcal{F}_h$,} 
\end{gather}
where $\M_{\mathcal{C}_h}\in\R^{N_c \times N_c}$,  $\M_{\mathcal{F}_h} \in \R^{N_f \times N_f}$  are suitable symmetric positive definite matrices. These matrices are locally constructed in such a way, on each cell, the corresponding  local discrete inner products  have to ``mimic'' the scalar products defined in   \eqref{eq:psh0}) and   \eqref{eq:psH}. Therefore we would like that
\begin{gather*}
 [u_{h,c}, v_{h,c}]_{{\mathcal{C}}_{h,c}} =: (u_{h,c})^T \M_{\mathcal{C}_{h,c}}v_{h,c} \approx  (u_h, \, v_h)_{L^2(c)}, \qquad \text{for all $u_h,v_h \in \mathcal{C}_h$,}  \\
  [\boldsymbol{\omega}_{h,c}, \boldsymbol{\sigma}_{h,c}]_{{\mathcal{F}}_{h,c}} =: (\boldsymbol{\omega}_{h,c})^T \M_{\mathcal{F}_{h,c}}\boldsymbol{\sigma}_{h,c} \approx  (\boldsymbol{\omega}_h\, , \boldsymbol{\sigma}_h)_{\K, c}, \qquad \text{for all $\boldsymbol{\omega}_h, \boldsymbol{\sigma}_h \in \mathcal{F}_h$,} 
\end{gather*}
where, in general, with the notation $r_{h,c}$ we denote the vector with the degrees of freedom of the function $r$ relative to the cell $c$.

As regards the first local inner products, we observe that  the vector $r_{h,c}$ has a single component, representing the (constant) value of $r_h$ in the cell $c$. Then the only possible quadrature formula  is
\begin{displaymath}
[u_{h,c}, v_{h,c}]_{{\mathcal{C}}_{h,c}} = (u_{h,c})^T\M_{{\mathcal{C}}_{h,c}}v_{h,c} = |c|u_{c}\,v_{c},
\end{displaymath}
therefore $\M_{{\mathcal{C}}_{h,c}} = |c|$ and $\M_{{\mathcal{C}_h}} :=  {\rm diag}   (|c_1|, \dots, |c_{N_c}|)$. It is clear that the discrete inner products gives the exact value of the continuous one whenever $u_h,v_h \in \mathcal{C}_h$.

The definition of the local scalar product for the fluxes requires a different approach. The key idea is to define suitable  consistency and stability constraints  in order to introduce algebraic conditions on the elements of the matrix $\M_{{\mathcal {F}}_{h,c}}$. Without spelling things out, we requires that the following properties are satisfied
\begin{itemize}
\item consistency: let  $\boldsymbol{\omega}, \boldsymbol{\sigma}$ two vector fields and let $\boldsymbol{\omega}_h, \boldsymbol{\sigma}_h \in \mathcal{F}_h$ their interpolant functions. If $\boldsymbol{\omega}$ is constant in $c$ and for each edge in $f \in \mathcal{E}_h(c)$,  $\boldsymbol{\sigma} \cdot \mathbf{n}_f$ is constant, then 
\begin{displaymath}
[\boldsymbol{\omega}_{h,c}, \boldsymbol{\sigma}_{h,c}]_{{\mathcal{F}}_{h,c}} = \int_c \K^{-1} \boldsymbol{\omega} \cdot\boldsymbol{\sigma} \, {\rm d}c;
\end{displaymath}
\item stability: there exist two positive $h$-independent constants  $C_*$ and $C^*$  such that
\begin{displaymath}
C_* |c| (\mathbf{\omega}_{h,c})^T \mathbf{\omega}_{h,c} \leq (\mathbf{\omega}_{h,c}^T) \M_{\mathcal{F}_ {h,c}} \mathbf{\omega}_{h,c} \leq C^* |c| (\mathbf{\omega}_{h,c}^T)  \mathbf{\omega}_{h,c} \qquad \forall \mathbf{\omega}_{h,c} \in \mathcal{F}_{h}.
\end{displaymath}
\end{itemize}

The last preliminary step in the construction of the MFD method is the definition of the \textbf{derived discrete operators}, which are obtained through a duality relation from the primary operators. Let us consider the spaces $\mathcal{C}_h$,  $\mathcal{F}_h$ equipped respectively with the scalar products  \eqref{eq:proc},   \eqref{eq:prof}. From continuous duality relations  \eqref{eq:dualit1}, we can introduce the discrete operator  
\begin{displaymath}
{\gd} \colon \mathcal{C}_h \to \mathcal{F}_h
\end{displaymath}
and impose the duality relation:
\begin{displaymath}
[ \boldsymbol{\omega}_h, {\gd} \, u_h]_{\mathcal{F}_h} = -[{\did} \, \boldsymbol{\omega}_h, u_h]_{\mathcal{C}_h}\Leftrightarrow  \boldsymbol{\omega}_h^T \M_{\mathcal{F}_h} {\gd}\,u_h = -\boldsymbol{\omega}_h^T {\did}^T \M_{\mathcal{C}_h} u_h\ , 
\end{displaymath}
for all $\boldsymbol{\omega}_h \in \mathcal{F}_h, u_h \in \mathcal{C}_h$, from which it follows that
\begin{displaymath}
{\gd} := - \M_{\mathcal{F}_h}^{-1} {\did}^T \M_{\mathcal{C}_h}.
\end{displaymath}
Finally we can introduce the discrete counterpart of the continuous operator ${\rm div}\, \K \nabla$, by defining the operator
\begin{displaymath}
\Delta_h \colon \mathcal{C}_h \to \mathcal{C}_h
\end{displaymath} 
given by
\begin{equation}
\label{eq:mfddelta}
\Delta_h := {\did} \, {\gd}.
\end{equation}

\section{The continuous problem}
\label{sec:3}

Let $\Omega \subseteq \R^2$ be a bounded  polygon and let us consider the \textbf{nonlinear wave equation} with  homogeneous boundary value problem
\begin{equation}
\label{eq:onde}
\left \{
\begin{aligned}  
& u_{tt}(x,t)    = {\rm div} \K \nabla u(x,t) - f'(u(x,t))   & \quad  \text{in $\Omega \times (0, T)$}  \\             
& u(x,0) = u_0(x)\ , \quad u_t(x,0) = v_0(x)                   &\quad  \text{in $\Omega$}               \\
& u(x,t) = 0                                                    & \quad \text{on $\partial \Omega \times (0,T)$} 
\end{aligned}       
\right .
\end{equation}
where $\K \in (W^{1,\infty})^{2 \times 2}$  is a full symmetric positive definite tensor, and the source term $f'$ is the derivative of a smooth function $f \colon \R \to \R$.
We would observe that no particularly restrictive assumptions on $f'$ are required, for instance $f'$ in the sine-Gordon equation or the ones of polynomial type with respect to $u$  
may be considered. For seek of simplicity we consider in the proof $f'$ global Lipschitz, however the convergence results are still valid for $f'$ local Lipschitz (see  Remark \ref{oss:local_lip})

 \eqref{eq:onde} admits the equivalent formulation
\begin{equation}
\label{eq:ondesym}
\left \{
\begin{aligned}  
& u_t (x,t) = v(x,t)                &  \text{in $\Omega \times (0, T)$} \\  
& v_t (x,t) = {\rm div} \K \nabla u(x,t) - f'(u(x,t))   &  \text{in $\Omega \times (0, T)$} 
\end{aligned}       
\right.
\end{equation}
where the initial and boundary conditions are given by
\begin{gather*}
u(x,0) = u_0(x), \quad v(x,0) = v_0(x)                    \quad \text{in $\Omega$}             \qquad
u(x,t) = 0, \quad v(x,t) = 0  \ ,                          \quad  \text{on $\partial \Omega \times (0,T)\ .$}
\end{gather*}
 \eqref{eq:ondesym} is said \textbf{Hamiltonian formulation} of  \eqref{eq:onde} for which the \textbf{Hamiltonian}
\begin{equation}
\label{eq:funzionale}
\mathcal{H}[u,v] := \int_{\Omega} \left( \frac{1}{2} v^2 + \frac{1}{2} \nabla u \cdot \K  \nabla u + f(u) \right){\rm d}x
\end{equation}
is invariant with respect to time $t$ along the solution, that is
\begin{equation}
\label{eq:global laws}
\frac{d}{dt}\mathcal{H}[u,v] = 0 \ .
\end{equation}
The \textbf{energy density} of the system is defined by
\begin{equation}
\label{eq:energia}
E(u,v) := \frac{1}{2} v^2 + \frac{1}{2} \nabla u \cdot \K  \nabla u + f(u) \,.
\end{equation}
The total derivative of $E(u,v)$ with respect to $t$, along the solution $(u,v)$ of  \eqref{eq:ondesym}, is given by
\begin{displaymath}
E_t  = ({\rm div} \K \nabla u)v  + \nabla u \cdot \K \nabla v = {\rm div} \left( v \, \K \nabla u\right).
\end{displaymath}
Let $\boldsymbol{\omega}(u,v):= - v \, \K \nabla u$ the \textbf{energy flux}, then we have the \textbf{energy conservation law}
\begin{equation}
\label{eq:energy law}
E_t(u,v) + {\rm div} \, \boldsymbol{\omega}(u,v) = 0\ ,
\end{equation}
which is more general than the global conservation of the Hamiltonian. Indeed if the energy conservation law holds, then it is easy to prove that $\frac{d}{dt}\mathcal{H}[u,v]=0$.




\section{The semi-discrete problem}
\label{sec:4}
By using the MFD approach we can  approximate the continuous operators by discrete ones, in  order to derive the semi-discrete problem for the wave  \eqref{eq:onde}. Then the resulting \textbf{semi-discrete wave equation} reads:
\begin{equation}
\label{eq:onde semi}
\left \{
\begin{aligned} 
& u_{h, tt}(t) = \Delta_h \, u_h(t) - f'(u_h(t)) & \text{for $t \in (0,T)$,}\\
&u_h(0) = u_{h,0}, \quad u_{h,t}(0) = v_{h,0}  \ ,
\end{aligned} 
\right .
\end{equation}
where $u_{h,0} := u_0^I$ and $v_{h,0} := v_0^I$ are the interpolant functions in $\mathcal{C}_h$ of the initial data. In the same way,  \eqref{eq:ondesym} can be discretized in the following form
\begin{equation}
\label{eq:ondesym semi}
\left \{
\begin{aligned} 
& u_{h, t}(t) = v_h(t)  & \text{for  $t \in (0,T)$,}\\
& v_{h, t}(t) = \Delta_h\, u_h(t) - f'(u_h(t)) & \text{for  $t \in (0,T)$,}\\
&u_h(0) = u_{h,0}, \quad v_h(0) = v_{h,0}  \ .
\end{aligned} 
\right .
\end{equation}
We observe that  the semi-discrete  \eqref{eq:ondesym semi} preserves the Hamiltonian structure of  \eqref{eq:ondesym}. In light of the definition in  Section \ref{sec:2}, the Hamiltonian functional $\mathcal{H}$ in  \eqref{eq:funzionale} admits the natural {\bf mimetic} semi-discretization:
\begin{equation}
\label{eq:funzionalesemi}
\mathcal{H}_h[u_h, v_h] := \frac{1}{2} [v_h, v_h]_{\mathcal{C}_h} + \frac{1}{2} [{\gd} \,u_h, {\gd}\, u_h]_{\mathcal{F}_h} + [f(u_h), 1]_{\mathcal{C}_h}\ ,
\end{equation}
that will be called \textbf{mimetic semi-discrete Hamiltonian  functional}.

We can observe now that, if we denote with $\nabla_{v_h}$ the gradient with respect to the variable $v_h$ and with $\nabla_{u_h}$ the gradient with respect to the variable $u_h$, then
\begin{displaymath}
\M_{\mathcal{C}_h}^{-1} \, \nabla_{v_h} \mathcal{H}_h[u_h, v_h] = \M_{\mathcal{C}_h}^{-1} \M_{\mathcal{C}_h} v_h = v_h \ ,
\end{displaymath}
and
\begin{equation*}
\begin{split}
\M_{\mathcal{C}_h}^{-1} \, \nabla_{u_h} \mathcal{H}_h[u_h, v_h]  &= \M_{\mathcal{C}_h}^{-1} \left( {\gd}^T \, \M_{\mathcal{F}_h}\, {\gd} \, u_h  + \M_{\mathcal{C}_h} f'(u_h) \right) = -\Delta_h \,u_h + f'(u_h).
\end{split}
\end{equation*}
Hence  \eqref{eq:onde semi} may be written as a Hamiltonian system of ordinary differential equations (ODEs), that is as:
\begin{displaymath}
\begin{pmatrix}
u_{h,t} \\ v_{h,t}
\end{pmatrix}
=
\mathcal{J}_{N_c} \M_{\mathcal{C}_h}^{-1}  \nabla  \mathcal{H} [u_h, v_h]\ ,
\end{displaymath}
where $\mathcal{J}_{N_c}$ is the canonical symplectic matrix  $\bigl( \begin{smallmatrix}& 0 & I_{N_c}\\ & -I _{N_c}& 0  \end{smallmatrix}\bigr)$ while $\nabla$ denotes the gradient with respect the variables $u_h$ and $v_h$.

We can conclude that the MFD approach gives a finite-dimensional system of ODEs that retains the Hamiltonian character of the given PDE. Therefore MFD methods can be considered powerful scheme for the spatial discretization of Hamiltonian PDEs.


\subsection{Convergence for the semi-discrete problem}
\label{sub:4.2}

Now we will investigate the convergence of the solution $u_h$ of the semi-discrete wave equation  \eqref{eq:onde semi} to the solution $u$ of   \eqref{eq:ondesym} in $L^2(\Omega)$ norm. Before analysing the error between the solution, we have to show some preliminary technical results.

Let us introduce the \textbf{energy projection} $\mathcal{P}_h \colon H^2(\Omega) \to \mathcal{C}_h$, with $u \mapsto \mathcal{P}_h \, u$ defined as the solution of the diffusion problem
\begin{equation}
\label{eq:proiezione}
\left \{
\begin{aligned} 
& \text{find $\mathcal{P}_h\,u \in  \mathcal{C}_h$ and $\boldsymbol{\sigma}_h \in \mathcal{F}_h$ such that} \\
& [\boldsymbol{\sigma}_h , \boldsymbol{\omega}_h]_{\mathcal{F}_h} + [\mathcal{P}_h\, u, {\did} \,\boldsymbol{\omega}_h]_{\mathcal{C}_h} = 0 \qquad & \text{for all $\boldsymbol{\omega}_h \in \mathcal{F}_h$,} \\
& [{\did}\, \boldsymbol{\sigma}_h , w_h]_{\mathcal{C}_h} = [({\rm div} \K \nabla u)^I, w_h]_{\mathcal{C}_h}  \qquad & \text{for all $w_h \in \mathcal{C}_h$.} \\ 
\end{aligned} 
\right .
\end{equation}
In particular, for the duality relation between the operators, the projection $\mathcal{P}_h \, u$ satisfies
\begin{equation}
\label{eq:lapint}
\Delta_h (\mathcal{P}_h \, u) = ({\rm div} \K \nabla u)^I.
\end{equation}

In the following  we use $\|\cdot \|_{\mathcal{C}_h}$ to denote the norm induced by the scalar product $[\cdot, \cdot]_{\mathcal{C}_h}$ (that is equivalent to $L^2(\Omega)$ norm on $\mathcal{C}_h$). We will denote with  $C$ a generic constant, possibly different at each occurrence, independent from the mesh size $h$ and the time step size $\tau$. In order to prove the convergence results, we need the following Lemma (see \cite{MR2192322} for the proof).
\begin{lemma}
\label{lemma1}
Let us assume the convexity of the domain $\Omega$.  Let $u \in H^2(\Omega)$ and let $\mathcal{P}_h\,u$ the energy projection of $u$. Then the following estimate holds:
\begin{displaymath}
\|u^I - \mathcal{P}_h\,u \|_{\mathcal{C}_h} \leq C h^2|u|_{H^2(\Omega)}.
\end{displaymath}
\end{lemma}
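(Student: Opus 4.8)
The goal is to bound the difference, in the discrete norm $\|\cdot\|_{\mathcal{C}_h}$, between the natural interpolant $u^I$ of a function $u\in H^2(\Omega)$ and its energy projection $\mathcal{P}_h u$, obtaining the second-order rate $Ch^2|u|_{H^2(\Omega)}$. My starting point would be the characterization \eqref{eq:lapint}, namely $\Delta_h(\mathcal{P}_h u)=({\rm div}\,\K\nabla u)^I$, together with the mixed formulation \eqref{eq:proiezione}. The standard route here is a duality (Aubin--Nitsche) argument on the mixed/mimetic system: set $e_h:=u^I-\mathcal{P}_h u\in\mathcal{C}_h$, introduce the auxiliary discrete problem whose right-hand side is $e_h$ itself, i.e.\ find $(\psi_h,\boldsymbol{\tau}_h)\in\mathcal{C}_h\times\mathcal{F}_h$ solving the same mixed system \eqref{eq:proiezione} with $({\rm div}\,\K\nabla u)^I$ replaced by $e_h$, and compare it with the continuous dual problem $-{\rm div}\,\K\nabla\psi=\bar e$ (where $\bar e$ is the piecewise-constant function associated to $e_h$) with homogeneous Dirichlet data. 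Convexity of $\Omega$ gives $\psi\in H^2(\Omega)$ with $|\psi|_{H^2(\Omega)}\le C\|\bar e\|_{L^2(\Omega)}=C\|e_h\|_{\mathcal{C}_h}$, which is the elliptic-regularity input that makes the $h^2$ rate possible.

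The core computation is then to expand $\|e_h\|_{\mathcal{C}_h}^2=[e_h,e_h]_{\mathcal{C}_h}$ by using $e_h$ as the datum in the dual discrete problem, integrate by parts at the discrete level (i.e.\ use the duality relation $[\boldsymbol{\omega}_h,\gd u_h]_{\mathcal{F}_h}=-[\did\boldsymbol{\omega}_h,u_h]_{\mathcal{C}_h}$), and insert the interpolants $\psi^I$, $(\K\nabla\psi)^I$ of the continuous dual solution. This produces two kinds of terms: consistency terms measuring how well the mimetic inner product $[\cdot,\cdot]_{\mathcal{F}_h}$ reproduces $(\cdot,\cdot)_{\K,c}$ on interpolated smooth fields, and interpolation terms of the form $\|({\rm div}\,\K\nabla u)^I-\Delta_h(\mathcal{P}_h u)\|$-type quantities that vanish by \eqref{eq:lapint}, leaving only $\|u^I-(\text{something built from }u)\|$. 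Each remaining factor should carry one power of $h$ from the interpolation/consistency estimates recorded in \cite{MR2192322} (these are exactly the first-order estimates $|f|^{1/2}$-weighted edge/cell approximation bounds under the shape-regularity assumptions (M1)--(M2)), and two such factors combine — one from the primal data $u$, one from the dual solution $\psi$ — to give $h^2|u|_{H^2(\Omega)}|\psi|_{H^2(\Omega)}$. Dividing by $\|e_h\|_{\mathcal{C}_h}=C^{-1}|\psi|_{H^2(\Omega)}$ yields the claim. Along the way the stability bound (the $C_*$, $C^*$ spectral equivalence of $\M_{\mathcal{F}_h}$) is used to pass between $\|\boldsymbol{\sigma}_h\|_{\mathcal{F}_h}$ and the Euclidean norm of flux degrees of freedom, and to control $\boldsymbol{\sigma}_h$ (the primal flux in \eqref{eq:proiezione}) by the data.

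The main obstacle, and the place where care is genuinely needed, is the consistency estimate for the mimetic flux inner product: one must show that $\big|[(\K\nabla u)^I,\boldsymbol{\omega}_h]_{\mathcal{F}_h}-\int_c \nabla u\cdot\boldsymbol{\omega}\,{\rm d}x\big|$ is $O(h)$ times appropriate norms, for $\boldsymbol{\omega}$ smooth enough, using only that $[\cdot,\cdot]_{\mathcal{F}_h}$ is exact when one argument is a constant vector field (the consistency property stated in Section~\ref{sec:2}) plus the stability bound. This is the standard "patch test + stability $\Rightarrow$ first-order consistency" argument of MFD theory, but it is the technical heart; everything else is bookkeeping with the discrete integration-by-parts identity and the triangle inequality. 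Since the statement explicitly cites \cite{MR2192322} for the proof, I would in fact lean on the consistency/interpolation lemmas proved there as black boxes and present only the duality argument assembling them into the $h^2$ bound. I would also note that convexity enters only through the $H^2$ elliptic regularity of the dual problem; without it one would lose the full second order.
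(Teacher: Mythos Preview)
The paper does not give its own proof of this lemma: it states the result and defers entirely to \cite{MR2192322} with the sentence ``see \cite{MR2192322} for the proof''. So there is no in-paper argument to compare against; the relevant question is whether your outline matches what is done in that reference.

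It does. The result is nothing other than the $L^2$ superconvergence estimate for the MFD discretization of the mixed elliptic problem, since by definition $\mathcal{P}_h u$ is the MFD solution with datum $({\rm div}\,\K\nabla u)^I$. The proof in \cite{MR2192322} proceeds exactly by the Aubin--Nitsche duality argument you sketch: one introduces the continuous dual problem with right-hand side the piecewise-constant error, uses convexity to obtain $H^2$ regularity of the dual solution $\psi$, and then combines first-order consistency of the mimetic flux inner product (exactness on constants plus stability) with the discrete integration-by-parts identity to gain one power of $h$ from each of the primal and dual problems. Your identification of the flux-inner-product consistency estimate as the technical heart, and of convexity as entering only through elliptic regularity, is accurate. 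Treating those consistency/interpolation lemmas as black boxes from \cite{MR2192322} and presenting only the duality assembly is precisely what the paper expects the reader to do.
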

While the next Lemma shows the spectral properties of the operator $\Delta_h$(see \cite{vacca} for more details).
\begin{lemma}
\label{lemma2}
The spectrum $\sigma(-\Delta_h)$ of $-\Delta_h$ satisfies
\begin{equation}
\label{eq:estremali}
\sigma(-\Delta_h) \subseteq [s_*,\, s^*h^{-2}],
\end{equation}
where $s_*$ and $s^*$ are positive and $h$-independent constants.
\end{lemma}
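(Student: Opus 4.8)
The plan is to prove Lemma \ref{lemma2} by combining the stability of the mimetic inner products (the constants $C_*$ and $C^*$) with the standard coercivity/boundedness of the discrete diffusion bilinear form and an inverse inequality on the piecewise-constant space $\mathcal{C}_h$. Since $-\Delta_h = -{\did}\,{\gd} = \M_{\mathcal{C}_h}^{-1} {\did}^T \M_{\mathcal{F}_h} {\did}$ (using ${\gd} = -\M_{\mathcal{F}_h}^{-1}{\did}^T\M_{\mathcal{C}_h}$), the operator $-\Delta_h$ is self-adjoint and positive semi-definite with respect to the $[\cdot,\cdot]_{\mathcal{C}_h}$ inner product. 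Its eigenvalues are therefore the Rayleigh quotients
\begin{displaymath}
\lambda = \frac{[-\Delta_h u_h, u_h]_{\mathcal{C}_h}}{[u_h,u_h]_{\mathcal{C}_h}} = \frac{[{\gd}\, u_h, {\gd}\, u_h]_{\mathcal{F}_h}}{[u_h,u_h]_{\mathcal{C}_h}},
\end{displaymath}
so the whole task reduces to bounding the discrete Dirichlet energy $[{\gd}\,u_h,{\gd}\,u_h]_{\mathcal{F}_h}$ above and below by $\|u_h\|_{\mathcal{C}_h}^2$, up to the claimed powers of $h$.

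For the \emph{upper} bound $\lambda \le s^* h^{-2}$, I would use the stability estimate to pass from $[{\gd}\,u_h,{\gd}\,u_h]_{\mathcal{F}_h}$ to $\sum_c |c|\,({\gd}\,u_h)_{h,c}^T({\gd}\,u_h)_{h,c}$, then estimate each face-component of ${\gd}\,u_h$ in terms of the jump of $u_h$ across that face divided by a length of order $h_c$; summing over faces and cells and invoking the mesh regularity assumptions \textbf{(M1)}--\textbf{(M2)} (which control $|f|$, $|c|$ and the number of faces per cell uniformly) gives $[{\gd}\,u_h,{\gd}\,u_h]_{\mathcal{F}_h} \le C h^{-2}\|u_h\|_{\mathcal{C}_h}^2$, i.e.\ an inverse inequality. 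This is really the mimetic analogue of the classical bound $\|\nabla u_h\|_{L^2} \le C h^{-1}\|u_h\|_{L^2}$ on piecewise constants, so the $h^{-2}$ on the eigenvalue side is sharp.

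For the \emph{lower} bound $\lambda \ge s_*$, the key point is a discrete Poincar\'e--Friedrichs inequality: the boundary condition built into the definition of ${\did}$ (no degrees of freedom/zero flux contribution on $\partial\Omega$, consistent with the homogeneous Dirichlet data) forces $\|u_h\|_{\mathcal{C}_h}^2 \le C\,[{\gd}\,u_h,{\gd}\,u_h]_{\mathcal{F}_h}$ with $C$ independent of $h$. I would obtain this either directly, by a discrete Poincar\'e inequality on polygonal meshes (available in the mimetic literature, e.g.\ \cite{BLM11book, MR2192322}), or by transferring the continuous Poincar\'e inequality through the interpolation/projection operators together with Lemma \ref{lemma1} and the consistency of the discrete inner products; the lower stability constant $C_*$ then enters when returning from the Euclidean expression to $[\cdot,\cdot]_{\mathcal{F}_h}$. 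Combining the two bounds yields \eqref{eq:estremali} with $s_* , s^*$ depending only on $\K$, the stability constants $C_*, C^*$, and the regularity parameters $\gamma,\sigma$.

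The main obstacle is the lower bound: proving a mesh-independent discrete Poincar\'e inequality on general polygonal meshes is the genuinely nontrivial ingredient, whereas the upper (inverse) bound is a routine consequence of stability plus mesh regularity. In practice I would lean on the mimetic-methods references for the discrete Poincar\'e estimate, so the proof here is mostly an assembly of known pieces; this is presumably why the authors attribute Lemma \ref{lemma2} to \cite{vacca} rather than reproving it in full.
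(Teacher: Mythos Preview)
The paper does not prove Lemma \ref{lemma2} at all; it simply cites \cite{vacca} for the details, as you correctly anticipated. Your outline---Rayleigh quotient characterisation, an inverse estimate for the upper bound, and a discrete Poincar\'e--Friedrichs inequality for the lower bound---is precisely the standard argument in the mimetic literature and is what the cited reference carries out, so your plan is sound.

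One small technical correction on the upper bound: you cannot literally read off ``each face-component of ${\gd}\,u_h$'' as a jump of $u_h$ divided by $h_c$, because ${\gd} = -\M_{\mathcal{F}_h}^{-1}{\did}^T\M_{\mathcal{C}_h}$ involves the \emph{inverse} of the (non-diagonal) flux mass matrix, which couples all faces of a cell. The clean route is to avoid computing ${\gd}\,u_h$ componentwise and instead bound
\[
[{\gd}\,u_h,{\gd}\,u_h]_{\mathcal{F}_h}
= ({\did}^T\M_{\mathcal{C}_h}u_h)^T \M_{\mathcal{F}_h}^{-1}({\did}^T\M_{\mathcal{C}_h}u_h)
\le C_*^{-1}\sum_{c}|c|^{-1}\!\!\sum_{f\in\mathcal{E}_h(c)}|f|^2\,(u_{c}-u_{c'})^2,
\]
where $u_{c'}$ is the value across $f$ (taken as $0$ on boundary faces); mesh regularity then gives the $h^{-2}$ factor. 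This is just a rephrasing of your idea, but it sidesteps the issue of inverting $\M_{\mathcal{F}_h}$ face by face. Your identification of the discrete Poincar\'e inequality as the only genuinely nontrivial ingredient is exactly right.
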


For the treatment of the nonlinear term we have the following lemma.

\begin{lemma}
\label{lemma4}
Let $f$ be a smooth function and let $u \in H^2(\Omega)$. Then
\begin{displaymath}
\|f'(u)^I - f'(u^I)\|_{\mathcal{C}_h} \leq  C(u) \, h^2.  
\end{displaymath}
\end{lemma}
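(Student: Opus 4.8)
\textbf{Proof proposal for Lemma \ref{lemma4}.}

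The plan is to estimate, cell by cell, the difference between the two natural discretizations of $f'(u)$: the interpolant $f'(u)^I$, whose component on a cell $c$ is the average $\frac{1}{|c|}\int_c f'(u)\,{\rm d}x$, and the nodal evaluation $f'(u^I)$, whose component on $c$ is $f'\!\left(\frac{1}{|c|}\int_c u\,{\rm d}x\right)$. Since $\|\cdot\|_{\mathcal{C}_h}^2 = \sum_{c\in\mathcal{T}_h}|c|\,|(\cdot)_c|^2$, it suffices to bound $\left|\frac{1}{|c|}\int_c f'(u)\,{\rm d}x - f'\!\left(\frac{1}{|c|}\int_c u\,{\rm d}x\right)\right|$ by $C(u)\,h_c^2$ uniformly in $c$, and then sum using $\sum_c |c| \le |\Omega|$.

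First I would introduce the cell average $\bar u_c := \frac{1}{|c|}\int_c u\,{\rm d}x$ and Taylor-expand $f'$ around $\bar u_c$: for $x\in c$,
\begin{displaymath}
f'(u(x)) = f'(\bar u_c) + f''(\bar u_c)\,(u(x)-\bar u_c) + R(x),
\end{displaymath}
where $|R(x)| \le \tfrac12 \|f''\|_{L^\infty(I)} |u(x)-\bar u_c|^2$ on the relevant interval $I$ (here one uses that $u\in H^2(\Omega)\hookrightarrow L^\infty(\Omega)$ in 2D, so $u$ and $\bar u_c$ stay in a fixed bounded interval, and $f$ smooth gives finite bounds on $f''$, $f'''$ there — this is where the constant acquires its dependence on $u$). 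Averaging over $c$ kills the linear term exactly, since $\frac{1}{|c|}\int_c (u-\bar u_c)\,{\rm d}x = 0$, leaving
\begin{displaymath}
\frac{1}{|c|}\int_c f'(u)\,{\rm d}x - f'(\bar u_c) = \frac{1}{|c|}\int_c R(x)\,{\rm d}x,
\qquad
\left|\frac{1}{|c|}\int_c f'(u)\,{\rm d}x - f'(\bar u_c)\right| \le \frac{C}{|c|}\int_c |u-\bar u_c|^2\,{\rm d}x.
\end{displaymath}
Now I would invoke a Poincar\'e inequality on $c$: $\|u-\bar u_c\|_{L^2(c)}^2 \le C h_c^2 |u|_{H^1(c)}^2$, with the Poincar\'e constant controlled uniformly over the mesh family by the shape-regularity assumptions (M1)--(M2) (star-shapedness with respect to a ball of radius $\gtrsim h_c$). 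This gives the per-cell bound $\frac{C}{|c|}\,h_c^2 |u|_{H^1(c)}^2 \le C h^2 \frac{|u|_{H^1(c)}^2}{|c|}$; squaring, multiplying by $|c|$, and summing over $c$ yields
\begin{displaymath}
\|f'(u)^I - f'(u^I)\|_{\mathcal{C}_h}^2 = \sum_{c} |c|\left|\tfrac{1}{|c|}\int_c f'(u) - f'(\bar u_c)\right|^2 \le C h^4 \sum_c \frac{|u|_{H^1(c)}^4}{|c|}.
\end{displaymath}

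The main obstacle is controlling that last sum without losing powers of $h$. Writing $|u|_{H^1(c)}^4/|c| = |u|_{H^1(c)}^2 \cdot (|u|_{H^1(c)}^2/|c|)$ and bounding $|u|_{H^1(c)}^2/|c|$ crudely by $\|\nabla u\|_{L^\infty}^2$-type quantities would require $W^{1,\infty}$ regularity, more than $H^2$; the cleaner route, which I would follow, is to use the improved Poincar\'e-type estimate $\|u-\bar u_c\|_{L^\infty(c)} \le C h_c\, \|\nabla u\|_{L^\infty(c)}$ only if such regularity is available, or — staying within $H^2$ — to instead estimate $\frac{1}{|c|}\int_c |u-\bar u_c|^2$ via the $H^2$-based bound $\|u-\bar u_c\|_{L^2(c)}\le C h_c^2 |u|_{H^2(c)}$ together with $\|u-\bar u_c\|_{L^2(c)} \le C h_c |u|_{H^1(c)}$, interpolating to get $\frac{1}{|c|}\int_c|u-\bar u_c|^2 \le C h_c^2 \cdot \frac{1}{|c|}\|u-\bar u_c\|_{L^2(c)}\,|u|_{H^2(c)} \le C h_c^3 |c|^{-1/2}\ldots$; more simply, one notes $\frac1{|c|}\|u-\bar u_c\|_{L^2(c)}^2 \le C h_c^2 \frac{|u|_{H^1(c)}^2}{|c|}$ and, since in 2D $\frac{|u|_{H^1(c)}^2}{|c|} \le C\|\,|\nabla u|\,\|_{L^\infty(c)}^2$ is too strong, one absorbs the extra factor by treating $f'$ as globally Lipschitz on the bounded range of $u$, so that $|f'(u)-f'(\bar u_c)| \le L|u-\bar u_c|$ pointwise gives directly $\frac{1}{|c|}\int_c|f'(u)-f'(\bar u_c)|\,{\rm d}x \le \frac{L}{|c|}\int_c |u-\bar u_c|\,{\rm d}x \le L\,h_c\,|c|^{-1/2}|u|_{H^1(c)}$ — and it is precisely the second-order Poincar\'e estimate on $c$, $\|u - \bar u_c\|_{L^2(c)} \le C h_c^2 |u|_{H^2(c)}$ valid because $\bar u_c$ is also the best constant $L^2$-approximation and $u\in H^2$, that upgrades $h_c$ to $h_c^2$. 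Concretely: $\frac1{|c|}\int_c|u-\bar u_c| \le |c|^{-1/2}\|u-\bar u_c\|_{L^2(c)} \le C h_c^2 |c|^{-1/2}|u|_{H^2(c)}$, whence the $c$-component is $\le C h^2 |c|^{-1/2}|u|_{H^2(c)}$, and
\begin{displaymath}
\|f'(u)^I - f'(u^I)\|_{\mathcal{C}_h}^2 \le \sum_c |c|\cdot C h^4 |c|^{-1}|u|_{H^2(c)}^2 = C h^4 \sum_c |u|_{H^2(c)}^2 = C h^4 |u|_{H^2(\Omega)}^2,
\end{displaymath}
giving $\|f'(u)^I - f'(u^I)\|_{\mathcal{C}_h} \le C(u)\,h^2$ with $C(u) = C\,\|f''\|_{L^\infty(\text{range }u)}\,|u|_{H^2(\Omega)}$ (or, for non-Lipschitz $f'$, a local version as in Remark \ref{oss:local_lip}). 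I expect the genuinely delicate point to be justifying the uniform (mesh-independent) second-order Poincar\'e constant on the polygons $c$, for which one cites the shape-regularity hypotheses (M1)--(M2) and a scaling argument from a reference configuration.
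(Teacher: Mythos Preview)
Your opening move --- second-order Taylor expansion of $f'$ around the cell average $\bar u_c$, so that the linear term integrates to zero --- is exactly the paper's approach, and it is the right one. The gap is in how you then abandon it. The estimate you invoke at the end,
\[
\|u-\bar u_c\|_{L^2(c)}\le C\,h_c^{2}\,|u|_{H^2(c)},
\]
is \emph{false}: take $u$ affine (e.g.\ $u=x_1$) on a square of side $h$, so that $|u|_{H^2(c)}=0$ while $\|u-\bar u_c\|_{L^2(c)}\sim h^2>0$. Bramble--Hilbert for degree-zero polynomials gives only $\|u-\bar u_c\|_{L^2(c)}\le C h_c\,|u|_{H^1(c)}$; the extra power of $h$ requires approximation by \emph{linears}, not by the mean. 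Consequently your Lipschitz route $|f'(u)-f'(\bar u_c)|\le L|u-\bar u_c|$ yields at best $O(h)$ in the $\mathcal{C}_h$-norm, not $O(h^2)$: the whole point of the second-order expansion is that the cancellation $\int_c(u-\bar u_c)=0$ buys you one order for free, and the Lipschitz bound throws that away.

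The sum you flagged as the obstacle, $\sum_c |u|_{H^1(c)}^4/|c|$, is in fact harmless once you use the right embedding. By Cauchy--Schwarz on each cell, $|u|_{H^1(c)}^2=\|\nabla u\|_{L^2(c)}^2\le |c|^{1/2}\|\nabla u\|_{L^4(c)}^2$, hence $|u|_{H^1(c)}^4/|c|\le \|\nabla u\|_{L^4(c)}^4$, and summing gives $\sum_c |u|_{H^1(c)}^4/|c|\le \|\nabla u\|_{L^4(\Omega)}^4$, which is finite since in two dimensions $H^2(\Omega)\hookrightarrow W^{1,4}(\Omega)$. Plugging this back into your first display yields $\|f'(u)^I-f'(u^I)\|_{\mathcal{C}_h}\le C\,h^2\,\|\nabla u\|_{L^4(\Omega)}^2\le C(u)\,h^2$, which is the claim. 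The paper follows essentially this route: it keeps the quadratic remainder $\tfrac12 f'''(\widehat u_c)(u-u_c)^2$, bounds $f'''$ in $L^\infty$ via $H^2\hookrightarrow L^\infty$, and controls $\sum_c\|u-u_c\|_{L^2(c)}^2\le C h^2|u|_{H^1(\Omega)}^2$ by the first-order Poincar\'e inequality (its norm bookkeeping is a bit loose, but the mechanism is the one above). So: go back to your first paragraph, keep the second-order Taylor expansion, and close the sum with H\"older and the $W^{1,4}$ embedding rather than with a nonexistent ``second-order Poincar\'e'' for the mean.
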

\begin{proof}
Let $u_c := (u^I)_c$ for every $c \in \mathcal{T}_h$. Then the  nonlinear term may be treated in the following way. Using the Taylor expansion, since $u \in H^2(\Omega)$ 
\begin{equation}
\label{eq:ftaylor}
f'(u(x)) = f'(u_c) + f''(u_c)(u(x) - u_c) + \frac{1}{2} f'''(\widehat{u}_c(x))(u(x) - u_c)^2 \qquad \text{for a.e. $x \in c$}
\end{equation}
for suitable $\widehat{u}_c(x)$ and  for every $c \in \mathcal{T}_h$. Then, setting $f(u)_c := \left( f(u)^I\right)_c$ and using  \eqref{eq:ftaylor}, we have:
\begin{equation*}
\begin{split}
f'(u)_c  - f'(u_{c})  &= \frac{1}{|c|} \int_{c} \left (f'(u(x)) - f'(u_{c}) \right) \,{\rm d}x \\
&= \frac{1}{|c|} \int_{c} \left( f'(u_c) + f''(u_c)(u(x) - u_c) + \frac{1}{2} f'''(\widehat{u}_c(x))(u(x) - u_c)^2 - f'(u_{c})\right) \, {\rm d}x. 
\end{split}
\end{equation*}
Now, since $f''(u_c)$ is constant and, by definition, $u_c = \frac{1}{|c|}\int_c u(x)\, {\rm d} x$, we obtain
\begin{displaymath}
f'(u)_c  - f'(u_{c})  = \sigma(u)_c
\end{displaymath}
with
\begin{equation}
\label{eq:sigma}
\sigma(u)_c :=  \frac{1}{2 \, |c|} \int_{c}  f'''(\widehat{u}_c(x))(u(x) - u_c)^2 \, {\rm d} x. 
\end{equation}
Now we observe that $u \in H^2(\Omega)$ for classic Sobolev embedding theory, implies that $u \in L^{\infty}(\Omega)$, then, since $\widehat{u}_c(x)$ is bounded by $u(x)$ and the constant value $u_c$ we obtain that $\widehat{u}_c \in L^{\infty}(c)$ for all element $c \in \mathcal{T}_h$. Therefore being $f$ a smooth function, $f'''(\widehat{u}_c) \in L^{\infty}(c)$.
Now, using the H{\"o}lder Theorem 
\begin{equation}
\label{eq:postsigma1}
\begin{split}
\|\sigma(u,t) \|_{\mathcal{C}_h}  &= \frac{1}{2} \sum_{c \in \mathcal{T}_h}  \left|\int_{c}  f'''(\widehat{u}_c(x))(u(x) - u_c)^2 \, {\rm d}x \right| \leq 
\frac{1}{2} \sum_{c \in \mathcal{T}_h} \int_{c}  |f'''(\widehat{u}_c(x))| (u(x) - u_c)^2 \, {\rm d}x \\
& \leq \frac{1}{2} \sum_{c \in \mathcal{T}_h}  \|f'''(\widehat{u}_c)\|_{L^{\infty}(c)} \| (u - u_c)^2\|_{L^1(c)} \leq C \, \sum_{c \in \mathcal{T}_h} \| (u - u_c)^2\|_{L^1(c)}. 
\end{split} 
\end{equation}
Now, using standard polynomial approximation results \cite{scott}, we have
\begin{equation}
\label{eq:postsigma}
\begin{split}
\|\sigma(u,t) \|_{\mathcal{C}_h}  &\leq C \sum_{c \in \mathcal{T}_h} \| (u - u_c)^2\|_{L^1(c)} = C \sum_{c \in \mathcal{T}_h} \| (u - u_c)\|^2_{L^2(c)} \\
&\leq C \sum_{c \in \mathcal{T}_h} h^2 \, |u|^2_{H^1(c)} = C \, h^2 \, |u|^2_{H^1(\Omega)}. 
\end{split}
\end{equation}
\end{proof}

Now we have the instruments for proving the following convergence theorem.
\begin{teorema}
\label{thm:convergenza}
Under the assumptions of  Lemma \ref{lemma1} and  Lemma \ref{lemma2}, let $u(x,t)$ be the solution of  \eqref{eq:onde} and $u_h(t)$ be the solution of  \eqref{eq:onde semi}. 
Let us assume that $u(\cdot, t) \in H^2(\Omega)$ for all $t \in [0, T]$.
Moreover  let us assume that $f'$ is globally Lipschitz. 
Then, for all $t \in [0,T]$, it follows that:
\begin{multline*}
\|u(t)^I - u_h(t) \|_{\mathcal{C}_h} \leq C\,  \psi(T) \, h^2 \, \bigl( |u_0|_{H^2(\Omega)} + |v_0|_{H^2(\Omega)} + |u_t|_{L^1(0,t, H^2(\Omega))}+ \bigr. \\
+ \left. |u_{tt}(t)|_{L^2(0,t, H^2(\Omega))} + |u(t)|_{L^2(0,t,H^2(\Omega))} + |u(t)|^2_{L^2(0,t,H^1(\Omega))}\right),
\end{multline*}
where $u(t)^I$ denotes the interpolant of $u(x,t)$ in $\mathcal{C}_h$ and the scalar function $\psi(t)$ is bounded for all $t \in [0,T]$. 
\end{teorema}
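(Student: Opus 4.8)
The plan is to compare $u_h(t)$ not directly with $u(t)^I$ but with the energy projection $\mathcal{P}_h u(t)$, exploiting the commutation property \eqref{eq:lapint}, and then to invoke Lemma \ref{lemma1} to pass from $\mathcal{P}_h u(t)$ back to $u(t)^I$. So I would set $e_h(t) := u_h(t) - \mathcal{P}_h u(t) \in \mathcal{C}_h$ and derive the equation it satisfies. Applying the mimetic operator $\Delta_h$ to $\mathcal{P}_h u(t)$ and using \eqref{eq:lapint} gives $\Delta_h(\mathcal{P}_h u(t)) = ({\rm div}\,\K\nabla u(t))^I$, while interpolating the PDE \eqref{eq:onde} gives $(u_{tt}(t))^I = ({\rm div}\,\K\nabla u(t))^I - (f'(u(t)))^I$. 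Subtracting the semi-discrete equation \eqref{eq:onde semi} from the interpolated PDE, one finds that $e_h$ solves a forced linear wave equation of the form $e_{h,tt} = \Delta_h e_h + g_h(t)$, where the forcing $g_h(t)$ collects three kinds of terms: the consistency error $(u_{tt}(t))^I - (\mathcal{P}_h u(t))_{tt} = (u_{tt}(t))^I - \mathcal{P}_h(u_{tt}(t))$ (controlled by Lemma \ref{lemma1} applied to $u_{tt}$), the projection/interpolation gap coming from writing things in terms of $e_h$, and the nonlinear discrepancy $(f'(u_h(t))) - (f'(u(t)))^I$.

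Next I would split the nonlinear discrepancy as
\[
f'(u_h(t)) - (f'(u(t)))^I = \bigl(f'(u_h(t)) - f'(u(t)^I)\bigr) + \bigl(f'(u(t)^I) - (f'(u(t)))^I\bigr).
\]
The second bracket is $O(h^2)$ by Lemma \ref{lemma4}; the first bracket is, by the global Lipschitz hypothesis on $f'$, bounded in $\|\cdot\|_{\mathcal{C}_h}$ by $L\,\|u_h(t) - u(t)^I\|_{\mathcal{C}_h} \le L\bigl(\|e_h(t)\|_{\mathcal{C}_h} + \|u(t)^I - \mathcal{P}_h u(t)\|_{\mathcal{C}_h}\bigr) \le L\|e_h(t)\|_{\mathcal{C}_h} + Ch^2|u(t)|_{H^2}$ again via Lemma \ref{lemma1}. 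So the forcing obeys $\|g_h(t)\|_{\mathcal{C}_h} \le L\|e_h(t)\|_{\mathcal{C}_h} + C h^2\bigl(|u_{tt}(t)|_{H^2} + |u(t)|_{H^2} + |u(t)|_{H^1}^2\bigr)$.

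The core of the argument is then an energy estimate for the discrete linear wave equation. Writing $e_{h,t} =: w_h$ and using $-\Delta_h = \M_{\mathcal{C}_h}^{-1}\gd^T\M_{\mathcal{F}_h}\gd$, which is symmetric positive semidefinite in the $[\cdot,\cdot]_{\mathcal{C}_h}$ inner product (Lemma \ref{lemma2}), I define the discrete energy $\mathcal{E}_h(t) := \tfrac12[w_h,w_h]_{\mathcal{C}_h} + \tfrac12[\gd e_h, \gd e_h]_{\mathcal{F}_h}$ and differentiate: $\tfrac{d}{dt}\mathcal{E}_h(t) = [w_h, g_h(t)]_{\mathcal{C}_h} \le \|w_h\|_{\mathcal{C}_h}\|g_h(t)\|_{\mathcal{C}_h}$. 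Setting $\Phi_h(t) := \mathcal{E}_h(t)^{1/2}$, so that $\|w_h(t)\|_{\mathcal{C}_h} \le \sqrt2\,\Phi_h(t)$ and $\|e_h(t)\|_{\mathcal{C}_h} \le \sqrt2\,\Phi_h(0) + \sqrt2\int_0^t\|w_h\|\,ds$ (or more cleanly, bound $\|e_h\|$ via Lemma \ref{lemma2}'s coercivity $\|\gd e_h\|_{\mathcal{F}_h}^2 \ge s_*\|e_h\|_{\mathcal{C}_h}^2$, giving $\|e_h(t)\|_{\mathcal{C}_h} \le \sqrt{2/s_*}\,\Phi_h(t)$), the estimate on $g_h$ feeds back into $\tfrac{d}{dt}\Phi_h \le C\Phi_h + C h^2 R(t)$ with $R(t) := |u_{tt}(t)|_{H^2} + |u(t)|_{H^2} + |u(t)|_{H^1}^2$. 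Grönwall's inequality then yields $\Phi_h(t) \le e^{Ct}\bigl(\Phi_h(0) + Ch^2\int_0^t R(s)\,ds\bigr)$.

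For the initial data: $e_h(0) = u_{h,0} - \mathcal{P}_h u_0 = u_0^I - \mathcal{P}_h u_0$, which is $O(h^2)|u_0|_{H^2}$ by Lemma \ref{lemma1}, and similarly $w_h(0) = v_0^I - (\mathcal{P}_h u)_t(0) = v_0^I - \mathcal{P}_h v_0$ is $O(h^2)|v_0|_{H^2}$; hence $\Phi_h(0) \le Ch^2(|u_0|_{H^2} + |v_0|_{H^2})$. Finally, using Lemma \ref{lemma1} once more, $\|u(t)^I - u_h(t)\|_{\mathcal{C}_h} \le \|u(t)^I - \mathcal{P}_h u(t)\|_{\mathcal{C}_h} + \|e_h(t)\|_{\mathcal{C}_h} \le Ch^2|u(t)|_{H^2} + \sqrt{2/s_*}\,\Phi_h(t)$, and substituting the Grönwall bound — with $\int_0^t R(s)\,ds$ dominated by $|u_{tt}|_{L^2(0,t;H^2)} + |u|_{L^2(0,t;H^2)} + |u|^2_{L^2(0,t;H^1)}$ via Cauchy--Schwarz in time and $|u_t|_{L^1(0,t;H^2)}$ absorbing the contribution arising if one instead integrates $w_h$ directly — gives the claimed bound with $\psi(T) := (1 + \sqrt{2/s_*})e^{CT}$, which is indeed bounded on $[0,T]$.

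The main obstacle I anticipate is bookkeeping the coupling between $\|e_h\|_{\mathcal{C}_h}$ and the energy $\mathcal{E}_h$ correctly: because $-\Delta_h$ is only positive semidefinite in general (it has no zero eigenvalue here only thanks to the homogeneous Dirichlet conditions being baked into the mesh operators, so Lemma \ref{lemma2}'s lower bound $s_* > 0$ is essential), the passage from control of $\gd e_h$ to control of $e_h$ itself must go through Lemma \ref{lemma2} rather than a naive Poincaré argument, and one must be careful that the Grönwall constant $C$ hidden in $\psi$ does not secretly depend on $h$ through $s^* h^{-2}$ — it does not, since only the lower spectral bound $s_*$ enters the energy estimate. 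The treatment of $(u_{tt})^I - \mathcal{P}_h(u_{tt})$ also requires that $u_{tt}(\cdot,t) \in H^2(\Omega)$, which is why that seminorm (in its $L^2$-in-time form) appears in the final estimate; verifying this is consistent with the stated hypotheses (the theorem only assumes $u(\cdot,t)\in H^2$) is a regularity point that the paper presumably handles by assuming enough smoothness of the data, and I would flag it as a standing assumption rather than prove it here.
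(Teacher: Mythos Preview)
Your decomposition $u_h - u^I = e_h + (\mathcal{P}_h u - u^I)$, the derivation of the error equation for $e_h$, and the handling of the nonlinear term via Lemma~\ref{lemma4} and the Lipschitz hypothesis all match the paper. The gap is in the energy argument itself, specifically at $t=0$.

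Your energy $\mathcal{E}_h(t) = \tfrac12\|w_h\|_{\mathcal{C}_h}^2 + \tfrac12\|\gd e_h\|_{\mathcal{F}_h}^2$ contains the gradient term $\|\gd e_h(0)\|_{\mathcal{F}_h}$, and you assert $\Phi_h(0) \le Ch^2(|u_0|_{H^2}+|v_0|_{H^2})$ by invoking Lemma~\ref{lemma1}. But Lemma~\ref{lemma1} only gives $\|e_h(0)\|_{\mathcal{C}_h} = \|u_0^I - \mathcal{P}_h u_0\|_{\mathcal{C}_h} \le Ch^2|u_0|_{H^2}$; it says nothing about $\gd e_h(0)$. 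With the tools available you can only get $\|\gd e_h(0)\|_{\mathcal{F}_h} = O(h)$: either via the inverse estimate $\|\gd v\|_{\mathcal{F}_h}^2 \le s^*h^{-2}\|v\|_{\mathcal{C}_h}^2$ from Lemma~\ref{lemma2}, or via $\|\gd e_h(0)\|_{\mathcal{F}_h}^2 = -[\Delta_h e_h(0), e_h(0)]_{\mathcal{C}_h} \le \|\Delta_h u_0^I - ({\rm div}\,\K\nabla u_0)^I\|_{\mathcal{C}_h}\,\|e_h(0)\|_{\mathcal{C}_h}$, and the first factor does \emph{not} tend to zero (this is exactly the quantity $L(u)$ discussed, with a numerical illustration, in the remark immediately following the theorem). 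Your Gr\"onwall step then gives $\Phi_h(t) = O(h)$ and the final $L^2$ bound degrades to first order.

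The paper circumvents this with the Baker-type device: instead of testing the error equation against $\theta_t$, it introduces $G(t) = -\int_0^t (f'(u_h)-f'(u)^I)\,ds$, rewrites the equation so that only first time derivatives appear, and tests against $\chi(t) := \int_t^{\tau} \theta(s)\,ds$, so that $\chi_t = -\theta$ and $\chi(\tau)=0$. After integrating in $t$ from $0$ to $\tau$, the gradient contribution appears as $+\|\gd\chi(0)\|_{\mathcal{F}_h}^2$ on the left with a favorable sign and is simply dropped; the only surviving initial-data term is $\|\theta(0)\|_{\mathcal{C}_h}^2 = O(h^4)$. The boundary term $[(u_t^I - u_{h,t}) + G, \chi]_{\mathcal{C}_h}$ at $t=0$ vanishes because $u_{h,t}(0)=v_0^I=u_t(0)^I$ and $G(0)=0$. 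This trick is precisely what recovers the optimal $L^2$ rate for second-order hyperbolic problems when only an $L^2$ estimate on $u^I - \mathcal{P}_h u$ (and no discrete $H^1$ estimate) is available.
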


\begin{proof}
The proof follows the guidelines of Theorem~1 in \cite{nonlinear} for given for the finite element approximation.
Let us set
\begin{equation}
\label{eq:first}
u_h(t) - u(t)^I = \left( u_h(t) - \mathcal{P}_h \, u(t) \right) + \left( \mathcal{P}_h \, u(t) - u(t)^I \right) =:  \theta(t) + \rho(t).
\end{equation}
We study separately the two terms. The second term represents the error generated by the energy projection;  using  Lemma \ref{lemma1}, we obtain
\begin{equation}
\label{eq:rho}
\begin{split}
\| \rho(t)\|_{\mathcal{C}_h} &= \left \|\mathcal{P}_h \, u(t) - u(t)^I \right \|_{\mathcal{C}_h} \leq C h^2 |u(t)|_{H^2(\Omega)}
= C h^2 \left( |u(0)|_{H^2(\Omega)} + \int_0^t |u_t(s)|_{H^2(\Omega)} {\rm d}\, s \right)  \\
& \leq C h^2 \left( |u_0|_{H^2(\Omega)} + |u_t|_{L^1(0,t, H^2(\Omega))} \right).
\end{split}
\end{equation}
For the first term, from  \eqref{eq:onde semi} and  \eqref{eq:lapint}, we get
\begin{displaymath}
\theta_{tt}(t) - \Delta_h\, \theta(t) = -f'(u_h(t)) - (\mathcal{P}_h \, u(t))_{tt} + ({\rm div} \K \nabla u(t))^I 
\end{displaymath}
and, since $u$ is the solution of  \eqref{eq:onde}, we obtain
\begin{equation*}
\begin{split}
\theta_{tt}(t) -\Delta_h\, \theta(t) & = -f'(u_h(t)) - \mathcal{P}_h \, u_{tt}(t) + u_{tt}(t)^I + (f'(u(t)))^I \\
&= - \rho_{tt}(t) - \left( f'(u_h(t))- f'(u(t))^I \right)
\end{split}
\end{equation*}
and in particular 
\begin{equation}
\label{eq:theta}
\begin{split}
[\theta_{tt}(t), \,  \chi]_{\mathcal{C}_h}  - [\Delta_h\, \theta(t) , \, \chi]_{\mathcal{C}_h}  = 
-[ \rho_{tt}(t), \, \chi]_{\mathcal{C}_h}  - [ f'(u_h(t))- f'(u(t))^I, \, \chi]_{\mathcal{C}_h} 
\end{split}
\end{equation}
for all $\chi \in \mathcal{C}_h$. For $t \in [0, T]$ let use define
\begin{equation}
\label{eq:G}
G(t) := \int_0^t  - (f'(u_h(s))- f'(u(s))^I) \, {\rm d}s,
\end{equation}
and let $\chi = \chi(t) \in \mathcal{C}_h$ in  \eqref{eq:theta} be a function of $t$. Then it is straightforward to see that
\begin{multline}
\label{eq:theta1}
-[\theta_{t}(t), \,  \chi_t(t)]_{\mathcal{C}_h} - [\Delta_h\, \theta(t) , \, \chi(t)]_{\mathcal{C}_h}
 = \frac{d}{dt} [(u_{t}^I - u_{h,t})(t) + G(t), \,  \chi(t)]_{\mathcal{C}_h} +\\     
+[ \rho_{t}(t), \, \chi_t(t)]_{\mathcal{C}_h} - [ G(t), \, \chi_t(t)]_{\mathcal{C}_h}. 
\end{multline}
Let us fix $\tau \in [0,T)$ and we set in  \eqref{eq:theta1} 
\begin{displaymath}
\chi(t) := \int_t^{\tau} \theta(s) \, {\rm d}s, \qquad \text{for $t \in [0,T]$,}
\end{displaymath}
in particular we can observe that $\chi_t(t) = - \theta(t)$. Now the duality relation among discrete operators and simple computations yield
\begin{multline*}
[\theta_{t}(t), \,  \theta(t)]_{\mathcal{C}_h} - [\gd \, \chi_t(t) , \, \gd \, \chi(t)]_{\mathcal{F}_h}
 = \frac{d}{dt} [(u_{t}^I - u_{h,t})(t) + G(t), \,  \chi(t)]_{\mathcal{C}_h} +\\     
-[ \rho_{t}(t), \, \theta(t)]_{\mathcal{C}_h} + [ G(t), \, \theta(t)]_{\mathcal{C}_h}
\end{multline*}
thus
\begin{equation}
\label{eq:theta2}
\frac{1}{2} \frac{d}{dt}\|\theta(t)\|^2_{\mathcal{C}_h} -\frac{1}{2} \frac{d}{dt} \|\gd \, \chi(t)\|^2_{\mathcal{F}_h}
 = \frac{d}{dt} [(u_{t}^I - u_{h,t})(t) + G(t), \,  \chi(t)]_{\mathcal{C}_h}      
+ [G(t) - \rho_{t}(t), \, \theta(t)]_{\mathcal{C}_h}.
\end{equation}
Integrating  \eqref{eq:theta2} with respect to $t$ from 0 to $\tau$, observing that $\chi(\tau) = 0$, $G(0)=0$ and by definition $u_{h,t}(0) = u_t(0)^I$, we get
\begin{displaymath}
\|\theta(\tau)\|^2_{\mathcal{C}_h} - \|\theta(0)\|^2_{\mathcal{C}_h} + \|\gd \, \chi(0)\|^2_{\mathcal{F}_h}
 =      
 2 \int_0^{\tau} [G(t) - \rho_{t}(t), \, \theta(t)]_{\mathcal{C}_h} \, {\rm d}t
\end{displaymath}
and then
\begin{equation}
\label{eq:theta3}
\|\theta(\tau)\|^2_{\mathcal{C}_h} \leq  \|\theta(0)\|^2_{\mathcal{C}_h} 
+ 2 \int_0^{\tau} \| G(t)\|_{\mathcal{C}_h} \|\theta(t)\|_{\mathcal{C}_h}  {\rm d}t 
+ 2 \int_0^{\tau} \| \rho(t)\|_{\mathcal{C}_h} \|\theta(t)\|_{\mathcal{C}_h}  {\rm d}t.  
\end{equation}
Now by definition  \eqref{eq:G}, from Lipschitz assumption on the load $f'$,  and  Lemma \ref{lemma4} we have
\begin{equation*}
\begin{split}
\| G(t)\|_{\mathcal{C}_h} & \leq  \int_0^t \|f'(u_h(s))  - f'(u(s))^I \|_{\mathcal{C}_h} \,{\rm d}s \\
& \leq  \int_0^t \|f'(u_h(s))  - f'(u(s)^I) \|_{\mathcal{C}_h} \,{\rm d}s +   \int_0^t \|f'(u(s))^I  - f'(u(s)^I) \|_{\mathcal{C}_h} \,{\rm d}s  \\
& \leq  C \int_0^t \| \theta(s) \|_{\mathcal{C}_h} \,{\rm d}s +   C \int_0^t \|\rho(s) \|_{\mathcal{C}_h} \,{\rm d}s + C \, h^2 \, \int_0^t |u(s) |^2_{H^1(\Omega)} \,{\rm d}s. 
\end{split}
\end{equation*}
Therefore, from Cauchy-Swartz inequality and  \eqref{eq:rho}
\begin{equation*}
\begin{split}
2 & \int_0^{\tau} \| G(t)\|_{\mathcal{C}_h} \|\theta(t)\|_{\mathcal{C}_h}  {\rm d}t  \\
& \leq 2 C  \int_0^{\tau} \left(\int_0^t \| \theta(s) \|_{\mathcal{C}_h} \,{\rm d}s +    \int_0^t \|\rho(s) \|_{\mathcal{C}_h} \,{\rm d}s +   h^2 \, \int_0^t |u(s) |^2_{H^1(\Omega)} \,{\rm d}s \right) \|\theta(t)\|_{\mathcal{C}_h} \,{\rm d} t \\
&  \leq  C\int_0^{\tau} \left( \int_0^t \|\rho(s) \|_{\mathcal{C}_h} \,{\rm d}s +   h^2 \, \int_0^t |u(s) |^2_{H^1(\Omega)} \,{\rm d}s \right)^2  \,{\rm d} t +  C\int_0^{\tau} (1 + \tau)  \|\theta(t)\|^2_{\mathcal{C}_h} \,{\rm d} t \\
&  \leq  C(u) \, T \,h^4 \, + C(1 + T) \, \int_0^{\tau} \|\theta(t)\|^2_{\mathcal{C}_h} \,{\rm d} t 
\end{split}
\end{equation*}
and always from Cauchy-Swartz and  \eqref{eq:rho}
\begin{displaymath}
2 \int_0^{\tau} \| \rho(t)\|_{\mathcal{C}_h} \|\theta(t)\|_{\mathcal{C}_h}  {\rm d}t \leq
\int_0^{\tau} \| \rho(t)\|^2_{\mathcal{C}_h}  {\rm d}t + \int_0^{\tau} \|\theta(t)\|^2_{\mathcal{C}_h}{\rm d}t \leq C(u) \, T \, h^4 + \int_0^{\tau} \|\theta(t)\|^2_{\mathcal{C}_h} \,{\rm d} t.
\end{displaymath}
By collecting the previous estimates in  \eqref{eq:theta3}, from  \eqref{eq:rho} we get
\begin{equation}
\label{eq:theta4}
\begin{split}
\|\theta(\tau)\|^2_{\mathcal{C}_h} & \leq  \|\theta(0)\|^2_{\mathcal{C}_h} + C(u) \, T \, h^4 + (1 + C + C \, T)  \int_0^{\tau} \|\theta(t)\|^2_{\mathcal{C}_h} \,{\rm d} t.
\end{split}
\end{equation}
It is straightforward to check that
\begin{displaymath}
 \|\theta(0)\|_{\mathcal{C}_h} =  \|\rho(0)\|_{\mathcal{C}_h} \leq C(u_0) \, h^2,
\end{displaymath}
then by Gronwall inequality it holds that
\begin{displaymath}
\|\theta(\tau)\|^2_{\mathcal{C}_h}  \leq C(u, u_0) \, h^4 \,T e^{LT}.
\end{displaymath}
from which follows the thesis.

\end{proof}

\begin{osservazione} The use of the projection $\mathcal{P}_h \, u$
 in the proof of the theorem seems to be necessary. Indeed if we compute directly $u_h(t) - u(t)^I$ as done for example in  \cite{MR2586202}, we  obtain a term of the form
\begin{displaymath}
L(u):= \left  \|({\rm div} \K \nabla u)^I - \Delta_h \, u^I \right \|_{\mathcal{C}_h} \ ,
\end{displaymath}
and $L(u)$ does not converge to zero. For instance in  Figure \ref{fig:1} we plot the asymptotic behaviour of $L(u)$ as a function of $h$ for $u(x,y)= \sin(\pi x)\sin(\pi y)$, tensor $\K = I_2$ and domain $\Omega = [0,1] \times [0,1]$ discretized with the sequence of Voronoi meshes introduced in Section \ref{sec:6}, see  Figure \ref{fig:2}. The value of $L(u)$ does not seem to converge to zero as $h$ is reduced.

\begin{figure}[!h]
\centering
\includegraphics[scale=0.25]{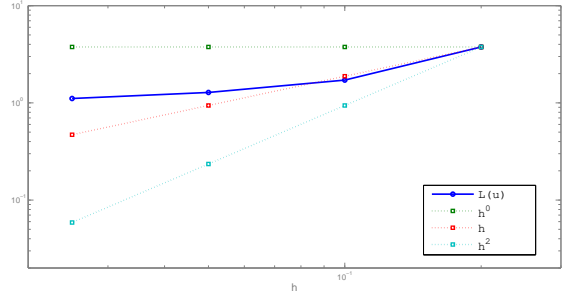}
\caption{Asymptotic behaviour of $L(u)$ as a function of $h$.}
\label{fig:1}
\end{figure}

\end{osservazione}

\begin{osservazione}
\label{oss:local_lip}
Using standard theory of polynomial approximations and the definition of the projection $\mathcal{P}_h$ (see \cite{nonlinear}), it is possible to extend the proof of Theorem \ref{thm:convergenza} to the case of $f'$ local Lipschitz instead of global Lipschitz. The proof being analogous but  more technical.
\end{osservazione}

\subsection{Conservation laws for the semi-discrete problem}
As for the continuous system, it is easy to prove  that the \textbf{global semi-discrete conservation law} of the Hamiltonian semi-discrete functional $\mathcal{H}_h[u_h, v_h]$ is preserved. Indeed using the duality definitions of the discrete operators, we have
\begin{equation}
\label{eq:semi global low}
\begin{split}
\frac{d}{dt}& \mathcal{H}_h[u_h, v_h]  = \frac{d}{dt} \left( \frac{1}{2} [v_h, v_h]_{\mathcal{C}_h} + \frac{1}{2} [{\gd} \,u_h, {\gd} \,u_h]_{\mathcal{F}_h} + [f(u_h), 1]_{\mathcal{C}_h} \right) \\
& = [v_{h,t}, v_h]_{\mathcal{C}_h} + [{\gd} \,u_{h,t}, {\gd}\, u_h]_{\mathcal{F}_h} + [f'(u_h)\, u_{h,t}, 1]_{\mathcal{C}_h}  \\
& = [\Delta_h \,u_h, v_h]_{\mathcal{C}_h} - [f'(u_h), v_h]_{\mathcal{C}_h}+ [{\gd} \,v_h, {\gd} \,u_h]_{\mathcal{F}_h} + [f'(u_h), v_h]_{\mathcal{C}_h}  \\
& = -[{\gd} \,u_h, {\gd} \,v_h]_{\mathcal{F}_h} + [{\gd} \,v_h, {\gd} \,u_h]_{\mathcal{F}_h} = 0\ ,
\end{split}
\end{equation}
along the solution $(u_h(t) , v_h(t))$ of  \eqref{eq:onde semi}.


We can define the \textbf{mimetic semi-discrete energy density} $E_h \in \mathcal{C}_h$ with
\begin{equation}
\label{eq:semi energia}
E_h(u_h, v_h)_{|_c} := \frac{1}{2} |c| v_{h,c}^2 + \frac{1}{2} \left [({\gd} \,u_h)_c , ({\gd} \,u_h)_c \right]_{\mathcal{F}_{h,c}} + |c|\,f(u_{h,c})\ ,
\end{equation}
and by computing its derivative with respect to $t$ along the solution, we have
\begin{equation*}
\begin{split}
\frac{d}{dt} & E_h(u_h, v_h)_{|_c} = \frac{d}{dt} \left( \frac{1}{2}|c|\, v_{h,c}^2 + \frac{1}{2} \left [({\gd} \,u_h)_c , ({\gd} \,u_h)_c \right]_{\mathcal{F}_{h,c}} + |c|\, f(u_{h,c})\right)\\
& = |c| \,v_{h,c} \, \frac{d}{dt} v_{h,c}  + \left[\frac{d}{dt}({\gd} \,u_h)_c, ({\gd} \,u_h)_c \right]_{\mathcal{F}_{h,c}} + |c|\, f'(u_{h,c})\frac{d}{dt} u_{h,c} \\
& = |c|(\Delta_h \,u_{h})_c \, v_{h,c} -|c|\, f'(u_{h,c}) \, v_{h,c} + \left[({\gd}\, v_h)_c, ({\gd}\, u_h)_c \right]_{\mathcal{F}_{h,c}} + |c|\,f'(u_{h,c})v_{h,c} \\
& = |c|(\Delta_h\, u_{h})_c \, v_{h,c} +  \left[({\gd} \,v_h)_c, ({\gd}\, u_h)_c \right]_{\mathcal{F}_{h,c}}.
\end{split}
\end{equation*}
Then, the following \textbf{mimetic semi-discrete energy conservation law} holds:
\begin{equation}
\label{eq:semi energy law}
E_{h,t}(u_h , v_h) + F_h(u_h, v_h) = 0
\end{equation}
where $F_h(u_h, v_h) \in \mathcal{C}_h$, defined by
\begin{displaymath}
F_{h,c}(u_h, v_h) = - |c|(\Delta_h \,u_{h})_c \, v_{h,c} -  \left[({\gd}\, v_h)_c, ({\gd}\, u_h)_c \right]_{\mathcal{F}_{h,c}}\ ,
\end{displaymath}
is a natural discretization of ${\did}\, (v_h \, {\gd} \, u_h)$.  \eqref{eq:semi energy law} represents the mimetic approximation of the energy conservation law  \eqref{eq:energy law}.

We have observed that the continuous Hamiltonian and semi-discrete Hamiltonian are first integrals respectively for system  \eqref{eq:ondesym} and  \eqref{eq:ondesym semi}. In particular, for all $t \in [0,T]$ we have
\begin{displaymath}
\mathcal{H}[u(t), v(t)] = \mathcal{H}[u_0, v_0] =: \mathcal{H}_0, \qquad \text{and} \qquad  \mathcal{H}_h[u_h(t), v_h(t)] = \mathcal{H}[u_{h,0}, v_{h,0}] =: \mathcal{H}_{h,0}
\end{displaymath}
where $(u(t), v(t))$ is the solution of  \eqref{eq:ondesym} and $(u_h(t), v_h(t))$ is the solution of  \eqref{eq:ondesym semi}.
In the following results we estimate the error between  the continuous Hamiltonian and the semi-discretized Hamiltonian.

\begin{lemma}
\label{lemma3}
Let $u \in H_0^3(\Omega)$,  and $v \in H_0^1(\Omega)$ and let $u^I$ and  $v^I \in \mathcal{C}_h$  their respective  interpolant function in $\mathcal{C}_h$. Then it follows that:
\begin{equation}
\label{eq:haminter}
\bigl|  \mathcal{H}[u,v] - \mathcal{H}_h[u^I,v^I] \bigr| \leq C \, h^2 \left(|u|^2_{H^1(\Omega)} + |v|^2_{H^1(\Omega)} + |u|_{H^1(\Omega)}|u|_{H^3(\Omega)} \right) 
\end{equation}
\end{lemma}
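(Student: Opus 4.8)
The strategy is to compare $\mathcal{H}[u,v]$ with $\mathcal{H}_h[u^I,v^I]$ term by term, according to the three summands in \eqref{eq:funzionale} and \eqref{eq:funzionalesemi}: the kinetic term $\frac12\int_\Omega v^2$ versus $\frac12[v^I,v^I]_{\mathcal{C}_h}$, the potential/gradient term $\frac12\int_\Omega \nabla u\cdot\K\nabla u$ versus $\frac12[\gd\,u^I,\gd\,u^I]_{\mathcal{F}_h}$, and the nonlinear term $\int_\Omega f(u)$ versus $[f(u^I),1]_{\mathcal{C}_h}$. For the first term I would write $\int_c v^2\,{\rm d}x - |c|v_c^2 = \|v-v_c\|^2_{L^2(c)}$ (using $v_c=\frac1{|c|}\int_c v$), sum over cells, and invoke the standard polynomial approximation estimate $\|v-v_c\|_{L^2(c)}\le C h|v|_{H^1(c)}$ exactly as in \eqref{eq:postsigma}, giving $O(h^2)|v|^2_{H^1(\Omega)}$. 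The nonlinear term is handled the same way: $\int_c f(u)\,{\rm d}x - |c|f(u_c)$, Taylor expand $f(u(x))$ around $u_c$, the linear term integrates to zero by definition of $u_c$, and the quadratic remainder is bounded by $C\|u-u_c\|^2_{L^2(c)}\le Ch^2|u|^2_{H^1(c)}$ — this is precisely the computation already carried out in Lemma \ref{lemma4}, so I would simply cite it. That accounts for the $|u|^2_{H^1(\Omega)}$ and $|v|^2_{H^1(\Omega)}$ contributions on the right-hand side.

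The gradient term is the only genuinely nontrivial one, and it explains both the $H^3$ regularity hypothesis and the mixed norm $|u|_{H^1(\Omega)}|u|_{H^3(\Omega)}$. Here I cannot compare $[\gd\,u^I,\gd\,u^I]_{\mathcal{F}_h}$ directly with $\int_\Omega\nabla u\cdot\K\nabla u$ because, as the preceding Remark stresses, $\gd\,u^I$ is not a good approximation of $\nabla u$ in general (the quantity $L(u)$ does not vanish). The fix is to route through the energy projection $\mathcal{P}_h u$ and its associated flux $\boldsymbol{\sigma}_h$ from \eqref{eq:proiezione}: one shows that $\frac12[\boldsymbol{\sigma}_h,\boldsymbol{\sigma}_h]_{\mathcal{F}_h}$ (equivalently $-\frac12[\mathcal{P}_h u,{\did}\boldsymbol{\sigma}_h]_{\mathcal{C}_h}$ by the first equation of \eqref{eq:proiezione}, which by \eqref{eq:lapint} equals $-\frac12[\mathcal{P}_h u,({\rm div}\K\nabla u)^I]_{\mathcal{C}_h}$) is close to $\frac12\int_\Omega\nabla u\cdot\K\nabla u = -\frac12\int_\Omega u\,{\rm div}\K\nabla u$. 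Then I estimate the difference between $\frac12[\boldsymbol{\sigma}_h,\boldsymbol{\sigma}_h]_{\mathcal{F}_h}$ and $\frac12[\gd\,u^I,\gd\,u^I]_{\mathcal{F}_h}$: since $\gd(\mathcal{P}_h u) = -\M_{\mathcal{F}_h}^{-1}{\did}^T\M_{\mathcal{C}_h}\mathcal{P}_h u$ and, from the mixed formulation, $\boldsymbol{\sigma}_h = \gd(\mathcal{P}_h u)$, one writes $[\boldsymbol{\sigma}_h,\boldsymbol{\sigma}_h]_{\mathcal{F}_h} - [\gd\,u^I,\gd\,u^I]_{\mathcal{F}_h} = [\gd(\mathcal{P}_h u - u^I),\gd(\mathcal{P}_h u + u^I)]_{\mathcal{F}_h}$ and controls $\|\gd(\mathcal{P}_h u - u^I)\|_{\mathcal{F}_h}$ via the inverse inequality of Lemma \ref{lemma2} (the factor $h^{-2}$ in the spectral bound on $-\Delta_h = {\did}\gd$) combined with the $O(h^2)$ bound of Lemma \ref{lemma1} on $\|u^I - \mathcal{P}_h u\|_{\mathcal{C}_h}$ — the $h^{-2}$ and $h^2$ roughly cancel, leaving an $O(1)\cdot|u|_{H^2}$-type factor, which multiplied against an $O(h)\|\gd\,u^I\|\sim O(h)|u|_{H^1}$-type factor after more careful bookkeeping yields the $h^2|u|_{H^1}|u|_{H^3}$ term (the extra derivative being needed to upgrade $H^2$ to the sharper estimate). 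In the last step I also need consistency of the mimetic flux inner product to pass from $[\boldsymbol{\sigma}_h,\boldsymbol{\sigma}_h]_{\mathcal{F}_h}$ to the continuous integral, picking up another $O(h)$ consistency error that, paired with $|u|_{H^1}$-size and regularity $H^3$, again produces a term of the stated mixed form.

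Assembling the three estimates with the triangle inequality gives \eqref{eq:haminter}. I expect the main obstacle to be the gradient term: keeping track of exactly which Sobolev seminorms appear, and verifying that the interplay between the inverse inequality (Lemma \ref{lemma2}), the superconvergence-type bound of Lemma \ref{lemma1}, and the consistency property of $\M_{\mathcal{F}_h}$ really does collapse to $O(h^2)$ rather than a worse power — in particular one must be careful that the $h^{-2}$ from the spectrum of $-\Delta_h$ is only ever applied once and always against a quantity known to be $O(h^4)$ in $\|\cdot\|^2_{\mathcal{C}_h}$, or equivalently $O(h^2)$ after the square root. The kinetic and nonlinear terms are routine and follow verbatim the arguments already in the paper.
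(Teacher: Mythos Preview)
Your treatment of the kinetic term $\frac12\int_\Omega v^2$ and the nonlinear term $\int_\Omega f(u)$ is correct and coincides with the paper's argument. The problem is the gradient term, where your bookkeeping does not close at order $h^2$.

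First, the power counting is off. From Lemma~\ref{lemma2} one has $\|\gd w\|_{\mathcal{F}_h}^2 = -[\Delta_h w,w]_{\mathcal{C}_h}\le s^*h^{-2}\|w\|_{\mathcal{C}_h}^2$, hence $\|\gd w\|_{\mathcal{F}_h}\le Ch^{-1}\|w\|_{\mathcal{C}_h}$: the $h^{-2}$ sits on the \emph{squared} norm. Combined with Lemma~\ref{lemma1} this gives $\|\gd(\mathcal{P}_hu-u^I)\|_{\mathcal{F}_h}=O(h)|u|_{H^2(\Omega)}$, not $O(1)$. On the other side, $\|\gd(\mathcal{P}_hu+u^I)\|_{\mathcal{F}_h}$ is an approximation of $2\|\K^{1/2}\nabla u\|_{L^2(\Omega)}$ and is genuinely $O(1)$; there is no mechanism making it $O(h)$. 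Consequently the Cauchy--Schwarz bound you propose,
\[
\bigl|[\gd(\mathcal{P}_hu-u^I),\gd(\mathcal{P}_hu+u^I)]_{\mathcal{F}_h}\bigr|
\le \|\gd(\mathcal{P}_hu-u^I)\|_{\mathcal{F}_h}\,\|\gd(\mathcal{P}_hu+u^I)\|_{\mathcal{F}_h},
\]
yields only $O(h)$, not $O(h^2)$, and no amount of ``more careful bookkeeping'' recovers the missing power from this inequality alone.

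The paper avoids this loss by never applying Cauchy--Schwarz to that product. After integrating by parts on both sides, the quantity to estimate is $\int_\Omega(\mathrm{div}\,\K\nabla u)\,u - [\Delta_h u^I,u^I]_{\mathcal{C}_h}$, which is split as $\alpha+\beta+\gamma$ with
\[
\alpha=\int_\Omega\bigl(\mathrm{div}\,\K\nabla u-(\mathrm{div}\,\K\nabla u)^I\bigr)(u-u^I),\quad
\beta=[\Delta_h(\mathcal{P}_hu-u^I),\,u^I-\mathcal{P}_hu]_{\mathcal{C}_h},\quad
\gamma=[\mathcal{P}_hu-u^I,\,(\mathrm{div}\,\K\nabla u)^I]_{\mathcal{C}_h}.
\]
Here $\alpha$ is the term that actually produces $h^2|u|_{H^1}|u|_{H^3}$ (each factor gains one $h$ from interpolation, and one needs $\mathrm{div}\,\K\nabla u\in H^1$, whence the $H^3$ hypothesis). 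The crucial move is in $\gamma$: one uses the self-adjointness of $\Delta_h$ together with \eqref{eq:lapint} to transfer $\Delta_h$ from the small difference $\mathcal{P}_hu-u^I$ onto $\mathcal{P}_hu$, where it becomes the \emph{bounded} quantity $(\mathrm{div}\,\K\nabla u)^I$; then $|\gamma|\le Ch^2|u|_{H^2}^2$ directly from Lemma~\ref{lemma1}. The only place the spectral bound $\|\Delta_h\|\le Ch^{-2}$ is invoked is in $\beta$, where it hits $\|\mathcal{P}_hu-u^I\|_{\mathcal{C}_h}^2=O(h^4)$, again giving $O(h^2)$. No appeal to the consistency property of $\M_{\mathcal{F}_h}$ is needed. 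Your plan has the right pivot (the energy projection $\mathcal{P}_hu$) but misses this self-adjointness step, which is precisely what upgrades $O(h)$ to $O(h^2)$.
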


\begin{proof}
We split the bound for the three terms composing the Hamiltonian functional. 
Let us start with
\begin{equation}
\label{eq:ham3}
\left |\int_{\Omega} f(u(x)) \, {\rm d}x - [f(u^I), 1]_{\mathcal{C}_h}  \right| \ ,
\end{equation}
and we observe that since $u \in L^2(\Omega)$, using the same computations in  \eqref{eq:ftaylor} cell by cell
\begin{displaymath}
f(u(x)) = f(u_c) -  f'(u_c) (u(x) - u_c) + \frac{1}{2} f''(\widetilde{u}_c(x))(u(x) - u_c)^2
\end{displaymath}
where we observe that using the same arguments in the proof of  Lemma \ref{lemma4} the term $f''(\widetilde{u}_c) \in L^{\infty}(c)$. 
Now, since by definition $u_c = \frac{1}{|c|} \int_c u(x) \, {\rm d}x$, using the same computation in  \eqref{eq:postsigma1} and  \eqref{eq:postsigma}, it follows that
\begin{multline*}
\left |\int_{c} f(u(x)) \, {\rm d}x - [f(u_c), 1]_{\mathcal{C}_{h,c}} \right| = \left| \int_c (f(u(x)) - f(u_c)) \, {\rm d}x  \right|  =\\
 = \left| \int_c \frac{1}{2} f''(\widetilde{u}_c(x))(u(x) - u_c)^2 \, {\rm d}x  \right|  \leq C \, h^2 \, |u|^2_{H^1(c)} . 
\end{multline*}

By adding in the cell $c \in \mathcal{T}_h$ we bound the first term  \eqref{eq:ham3} as follows
\begin{displaymath}
\left|\int_{\Omega} f(u(x)) \, {\rm d}x - [f(u^I), 1]_{\mathcal{C}_h}  \right| \leq C h^2 \sum_{c \in \mathcal{T}_h}|u|^2_{H^1(c)} \leq C \, h^2 \, |u|^2_{H^1(\Omega)}\ .
\end{displaymath}
For the term
\begin{equation}
\label{eq:ham2}
\left |\int_{\Omega} v^2 \, {\rm d}x - [v^I , v^I]_{\mathcal{C}_h}  \right| \ ,
\end{equation}
we observe that $[v^I, v^I]_{\mathcal{C}_h} = \left [(v^I)^2, 1\right]_{\mathcal{C}_h} $, and thus we can use the computations used before with $f(s) = s^2$.

For the last term, we preliminary observe that, using integration by parts and homogeneous boundary conditions, and since ${\gd} = -{\did}^*$, we have to estimate
\begin{displaymath}
\int_{\Omega}  ({\rm div} \K \nabla u)\, u \,{\rm d}x - [\Delta_h \, u^I, u^I]_{\mathcal{C}_h}.
\end{displaymath}
Now, by definition of interpolation operator in $\mathcal{C}_h$ and  \eqref{eq:lapint}, we get
\begin{equation}
\label{eq:ham1}
\begin{split}
&\int_{\Omega} ({\rm div} \K \nabla u) \, u \,{\rm d}x - [\Delta_h\, u^I, u^I]_{\mathcal{C}_h}=\\
&= \int_{\Omega} \left({\rm div} \K \nabla u - \left( {\rm div} \K \nabla u \right)^I  \right) \,  (u - u^I) \,{\rm d}x + \int_{\Omega}  ({\rm div} \K \nabla u ) \, u^I  \,{\rm d}x -  [\Delta_h\, u^I, u^I]_{\mathcal{C}_h} \\
&=: \alpha  +  \left [\left( {\rm div} \K \nabla u \right)^I -  \Delta_h \, u^I, u^I \right]_{\mathcal{C}_h}  \\
&= \alpha  +  \left [ \Delta_h \, (\mathcal{P}_h \,u - u^I), u^I -  \mathcal{P}_h \,u\right]_{\mathcal{C}_h} + \left [ \Delta_h \, (\mathcal{P}_h \,u - u^I),  \mathcal{P}_h \,u\right]_{\mathcal{C}_h}\\
&= \alpha  +  \left [ \Delta_h \, (\mathcal{P}_h \,u - u^I), u^I -  \mathcal{P}_h \,u\right]_{\mathcal{C}_h} + \left [  \mathcal{P}_h \,u - u^I,  \left( {\rm div} \K \nabla u \right)^I \right]_{\mathcal{C}_h} \\
&=: \alpha + \beta + \gamma.
\end{split}
\end{equation}
Using standard polynomial approximation estimates \cite{scott}, we have
\begin{equation}
\label{eq:alphaham}
|\alpha| = \left|  \int_{\Omega}  \left({\rm div} \K \nabla u - \left( {\rm div} \K \nabla u \right)^I  \right) \, (u - u^I) \,{\rm d}x \right| \leq C \, h^2 \,|u|_{H^1(\Omega)} \, |u|_{H^3(\Omega)}. 
\end{equation}
Moreover, by  Lemma \ref{lemma1} and  Lemma \ref{lemma2}, we get
\begin{equation}
\label{eq:betaham}
\begin{split}
|\beta| &= \left| \left [\Delta_h \, (\mathcal{P}_h \,u - u^I), u^I -  \mathcal{P}_h \,u\right]_{\mathcal{C}_h}\right|  \leq \|\Delta_h\| \|u^I -  \mathcal{P}_h \,u\|_{\mathcal{C}_h}^2 \\
&  \leq C \, h^{-2} \, h^4 \, |u|^2_{H^2(\Omega)}   = C\, h^2 \, |u|^2_{H^2(\Omega)}.
\end{split}
\end{equation} 
For the last term $\gamma$, we it holds
\begin{equation}
\label{eq:gammaham}
|\gamma| = \left| \left [  \mathcal{P}_h \,u - u^I,  \left( {\rm div} \K \nabla u \right)^I \right]_{\mathcal{C}_h}\right| \leq C  \, |u|_{H^2(\Omega)} \, \| \mathcal{P}_h \,u - u^I\|_{\mathcal{C}_h} \leq C\, h^2 \, |u|^2_{H^2(\Omega)}.
\end{equation}
Finally, by collecting  \eqref{eq:alphaham},  \eqref{eq:betaham} and  \eqref{eq:gammaham} in  \eqref{eq:ham1}, we obtain 
\begin{equation}
\label{eq:ham1def}
\left| \int_{\Omega}  ({\rm div} \K \nabla u) \, u  \,{\rm d}x - [\Delta_h \, u_h, u_h]_{\mathcal{C}_h} \right| \leq C \, h^2 \left(|u|^2_{H^2(\Omega)} + |u|_{H^1(\Omega)} \, |u|_{H^3(\Omega)} \right).
\end{equation}
Finally, the thesis follows from   \eqref{eq:ham3},  \eqref{eq:ham2} and  \eqref{eq:ham1def}.
\end{proof}

Since in order to define the initial data in the MFD discretization we used the interpolantion operator in $\mathcal{C}_h$,   Lemma \ref{lemma3} implies the following estimates on the error between the continuous and semi-discrete Hamiltonian.

\begin{teorema}
\label{thm:conservazione1}
Let $(u(t), v(t))$ be the solution of system  \eqref{eq:ondesym} and let $(u_h(t), v_h(t))$ be the solution of  \eqref{eq:ondesym semi}. Then, for all $t \in [0,T]$ it holds
\begin{equation}
\label{eq:hamerrorsemi}
\bigl | \mathcal{H}[u(t), v(t)]  - \mathcal{H}_h[u_h(t), v_h(t)] \bigr| \leq C h^2
\end{equation}
where the constant $C$ depends only on the regularity of the initial data $u_0$ and $v_0$.
\end{teorema}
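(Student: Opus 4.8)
The plan is to exploit the two conservation laws already in hand — the continuous one \eqref{eq:global laws} and the semi-discrete one \eqref{eq:semi global low} — to collapse the time-dependent estimate \eqref{eq:hamerrorsemi} into a time-independent one evaluated at $t=0$, and then simply to invoke Lemma \ref{lemma3}.

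First I would observe that, since $(u(t),v(t))$ solves \eqref{eq:ondesym}, the continuous invariance $\frac{d}{dt}\mathcal{H}[u(t),v(t)]=0$ gives $\mathcal{H}[u(t),v(t)] = \mathcal{H}[u_0,v_0]$ for every $t\in[0,T]$; likewise, since $(u_h(t),v_h(t))$ solves \eqref{eq:ondesym semi}, the semi-discrete invariance computed in \eqref{eq:semi global low} gives $\mathcal{H}_h[u_h(t),v_h(t)] = \mathcal{H}_h[u_{h,0},v_{h,0}]$. Recalling that the MFD initial data are defined by interpolation, namely $u_{h,0}=u_0^I$ and $v_{h,0}=v_0^I$, the left-hand side of \eqref{eq:hamerrorsemi} becomes, for every $t$, the $t$-independent quantity
\begin{equation*}
\bigl|\mathcal{H}[u(t),v(t)] - \mathcal{H}_h[u_h(t),v_h(t)]\bigr| = \bigl|\mathcal{H}[u_0,v_0] - \mathcal{H}_h[u_0^I,v_0^I]\bigr|.
\end{equation*}

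Second, I would apply Lemma \ref{lemma3} with the choice $u = u_0$ and $v = v_0$. Assuming the natural regularity/compatibility hypotheses on the initial data, $u_0 \in H^3_0(\Omega)$ and $v_0 \in H^1_0(\Omega)$, the lemma yields directly
\begin{equation*}
\bigl|\mathcal{H}[u_0,v_0] - \mathcal{H}_h[u_0^I,v_0^I]\bigr| \leq C\,h^2 \left( |u_0|^2_{H^1(\Omega)} + |v_0|^2_{H^1(\Omega)} + |u_0|_{H^1(\Omega)}\,|u_0|_{H^3(\Omega)} \right),
\end{equation*}
and since the right-hand side is of the form $C h^2$ with $C$ depending only on (norms of) the initial data, the thesis follows.

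The estimate is automatically uniform in $t\in[0,T]$ — this is the whole point of routing the argument through the two exact invariants rather than through a direct error analysis of the trajectories. Consequently there is essentially no analytic obstacle here beyond Lemma \ref{lemma3} itself; the only point requiring a word of care is making explicit that the hypotheses of Lemma \ref{lemma3} are met, i.e. that $u_0$ lies in $H^3_0(\Omega)$ (so that the $|u_0|_{H^3(\Omega)}$ term is finite) and $v_0$ in $H^1_0(\Omega)$, which should be added to, or read off from, the standing assumptions on the data.
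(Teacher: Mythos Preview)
Your proof is correct and follows essentially the same approach as the paper's own argument: reduce the time-dependent estimate to $t=0$ via the two exact conservation laws \eqref{eq:global laws} and \eqref{eq:semi global low}, then apply Lemma~\ref{lemma3} to the interpolated initial data $u_0^I$, $v_0^I$. Your version is slightly more explicit about the regularity hypotheses on $u_0$ and $v_0$ needed to invoke the lemma, which is a reasonable clarification.
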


\begin{proof}
For all $t \in [0;T]$, it holds that
\begin{displaymath}
\mathcal{H}[u(t), v(t)] = \mathcal{H}_0, \qquad \text{and} \qquad \mathcal{H}_h[u_h(t), v_h(t)] = \mathcal{H}_{h,0}.
\end{displaymath}
Now, since $u_{h,0} = u_0^I$ and $v_{h,0} = v_0^I$, from  Lemma \ref{lemma3} we get  the thesis.
\end{proof}

\section{The fully discrete problem: a symplectic MFD method}
\label{sec:5}


In this section we will derive  a  {\bf symplectic mimetic finite difference method} by applying a time integrator scheme to the semi-discrete problem  \eqref{eq:onde semi}. 
Because of the Hamiltonian structure of the system  \eqref{eq:onde semi} a symplectic scheme is usually employed to integrate in time, in order to preserve the symplectic structure of the flow map of the system.

Thus,  we apply the \textbf{symplectic implicit midpoint} (SIM) (which is a scheme of second order in time) to problem  \eqref{eq:onde semi} and get:
\begin{equation}
\label{eq:onde discreto}
\left \{
\begin{aligned}
& u_h^{n+1} = u_h^n + \tau \frac{v_h^{n+1} + v_h^n}{2} 
\\
& v_h^{n+1} = v_h^n + \tau \left(\Delta_h\, \frac{u_h^{n+1} + u_h^n}{2} - f'\left( \frac{u_h^{n+1} + u_h^n}{2}\right) \right) 
\\
& u_h^0 = u_{h,0}, \qquad v_h^0 = v_{h,0} 
\end{aligned}
\right .
\end{equation}
or equivalently
\begin{displaymath}
\left \{
\begin{aligned}
& v_h^{n+1} = v_h^n + \tau \left( \Delta_h\, \left(u_h^n + \tau \frac{v_h^{n+1} + v_h^n}{4} \right) - f'\left( u_h^n + \tau \frac{v_h^{n+1} + v_h^n}{4}\right) \right) 
\\
& u_h^{n+1} = u_h^n + \tau \frac{v_h^{n+1} + v_h^n}{2} 
\\
& u_h^0 = u_{h,0}, \qquad v_h^0 = v_{h,0} \ ,
\end{aligned}
\right . 
\end{displaymath}
where $(u^n_h,v^n_h)$ denotes the numerical approximation of $(u_h(t),v_h(t))$ at time $t_n=n\tau$,  for $n=0,\dots, N$ and $\tau = T/N$ represents  the time step length. Finally, by eliminating $v_h^n$ and $v_h^{n+1} $, we can express the system in the following form:
\begin{equation}
\label{eq:derivata seconda}
\begin{split}
\frac{u_h^{n+1} - 2 u_h^n + u_h^{n-1}}{\tau^2} &= \frac{v_h^{n+1} + v_h^n}{2 \tau} - \frac{v_h^{n} + v_h^{n-1}}{2 \tau} = \\
& = \frac{1}{2} \left( \Delta_h\, \left(\frac{u_h^{n+1} + 2 u_h^n + u_h^{n-1}}{2}\right) - \left(  f'\left(u_h^{n+\frac{1}{2}}\right) + f'\left(u_h^{n-\frac{1}{2}}\right)\right) \right) 
\end{split}
\end{equation}
where we use the notation
\begin{displaymath}
u_h^{q + \frac{1}{2}} = \frac{u_h^{q +1} + u_h^{q}}{2} \qquad \text{for $q=0,\dots, N-1$.}
\end{displaymath}
 
\subsection{Convergence for the fully discrete problem} 
\label{sub:5.1} 
 
We investigate the convergence of the sequence $\{u_h^n\}_{n = 1, \dots, N}$ to the exact solution $u$ of problem  \eqref{eq:onde}. The following result states the convergence of the numerical procedure in discrete $L^2$ norm.

\begin{teorema}
\label{them:convergenza full}
Let $u$ be the solution of problem  \eqref{eq:onde} and let $\{u_h^n\}_{n = 1, \dots, N}$ be the sequence generated by  \eqref{eq:onde discreto}. Then, if $u \in C^3([0,T],H^2(\Omega))$, it follows  that:
\begin{equation}
\label{eq:stima discreta}
\|u(t_n)^I - u_h^n\|_{\mathcal{C}_h} \leq C \,  h^2 \left( |u_0|_{H^2(\Omega)} + |u_t|_{H^1(0, t_n, H^2(\Omega))}\right) + C \, \tau^2
\end{equation}
where the constant $C$ depends on the regularity of $u$.
\end{teorema}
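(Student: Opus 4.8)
The plan is to combine the semi-discrete error estimate of Theorem~\ref{thm:convergenza} with a standard time-discretization error analysis for the symplectic implicit midpoint rule, using a triangle inequality split
\begin{displaymath}
\|u(t_n)^I - u_h^n\|_{\mathcal{C}_h} \leq \|u(t_n)^I - u_h(t_n)\|_{\mathcal{C}_h} + \|u_h(t_n) - u_h^n\|_{\mathcal{C}_h}.
\end{displaymath}
The first term is already $O(h^2)$ by Theorem~\ref{thm:convergenza} (note its right-hand side is, after using $|u(t)|_{H^2}\le |u_0|_{H^2}+\int_0^t|u_t|_{H^2}$ and the $C^3$-in-time regularity, controlled by $h^2(|u_0|_{H^2(\Omega)}+|u_t|_{H^1(0,t_n,H^2(\Omega))})$ up to the bounded factor $\psi$). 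So the real work is bounding the \emph{time} error $e_h^n := u_h(t_n) - u_h^n$, where $u_h$ solves the finite-dimensional Hamiltonian ODE system \eqref{eq:ondesym semi} and $u_h^n$ is its midpoint-rule approximation \eqref{eq:onde discreto}.

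For the time error I would work with the second-order form \eqref{eq:derivata seconda}. First, write down the truncation error: plug the exact semi-discrete solution $u_h(t_n)$ into \eqref{eq:derivata seconda} and Taylor-expand in $\tau$ about $t_n$; since the midpoint rule is second order and symmetric, the consistency error is $O(\tau^2)$ in the $\|\cdot\|_{\mathcal{C}_h}$ norm, with a constant depending on $\|u_h\|_{C^3([0,T],\mathcal{C}_h)}$ — and here I would invoke that the semi-discrete solution inherits the $C^3$-in-time regularity of $u$ (or argue it directly from the ODE system, since $\Delta_h$ and $f'$ are smooth enough), so this constant is $h$-independent. Subtracting, the error $e_h^n$ satisfies a linear-plus-nonlinear second-order difference equation of the form
\begin{displaymath}
\frac{e_h^{n+1} - 2 e_h^n + e_h^{n-1}}{\tau^2} = \tfrac12\Delta_h\!\left(\tfrac{e_h^{n+1}+2e_h^n+e_h^{n-1}}{2}\right) - \big(\text{Lipschitz-in-}f'\text{ terms}\big) + R^n,
\end{displaymath}
with $\|R^n\|_{\mathcal{C}_h}\le C\tau^2$. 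Then I would test this against $(e_h^{n+1}-e_h^{n-1})/(2\tau)$ (the discrete analogue of multiplying the continuous error equation by $\theta_t$, exactly as in the proof of Theorem~\ref{thm:convergenza}), sum by parts over $n$, and use the duality relation $\gd = -\did^{\,*}$ together with the self-adjointness and negativity of $\Delta_h$ to form a discrete energy quantity
\begin{displaymath}
\mathcal{E}^n := \Big\|\tfrac{e_h^{n+1}-e_h^n}{\tau}\Big\|^2_{\mathcal{C}_h} + \Big\|\gd\,\tfrac{e_h^{n+1}+e_h^n}{2}\Big\|^2_{\mathcal{F}_h}
\end{displaymath}
that is non-increasing up to the contributions of $R^n$ and the $f'$-Lipschitz term. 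The Lipschitz term is absorbed by $\mathcal{E}^n$ itself; the remainder contributes $\le C\tau\sum_k\tau^2\,(\mathcal{E}^k)^{1/2}$. A discrete Gronwall inequality then gives $\mathcal{E}^n \le C\,\tau^4\,e^{LT}$ (with the starting data $e_h^0=0$ and $e_h^1$ also $O(\tau^2)$ from a one-step consistency argument), hence $\|e_h^n\|_{\mathcal{C}_h}\le C\tau^2$, and combining with the $h^2$ bound yields \eqref{eq:stima discreta}.

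The main obstacle I expect is the energy/summation-by-parts bookkeeping for the nonlinear term: the midpoint scheme evaluates $f'$ at the \emph{midpoints} $u_h^{n\pm1/2}$, so when one tests with $(e_h^{n+1}-e_h^{n-1})/(2\tau)$ the nonlinear contribution must be rewritten (via the mean-value theorem applied to the difference $f'(u_h^{n+1/2})-f'(\text{corresponding midpoint of }u_h))$) into a telescoping-friendly form so that it is genuinely controlled by $\mathcal{E}^n$ without picking up an extra factor of $\tau^{-1}$; one also has to ensure the discrete Gronwall constant $L$ does not depend on $h$ through $\|\Delta_h\|\sim h^{-2}$, which is why the energy argument (testing with a quantity that makes $\Delta_h$ enter only through the manifestly nonnegative term $\|\gd(\cdot)\|^2_{\mathcal{F}_h}$) is essential rather than a naive estimate that would lose a CFL-type condition. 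A secondary technical point is justifying the $C^3$-in-time regularity and the $h$-uniform bound on $\|u_h\|_{C^3([0,T],\mathcal{C}_h)}$ needed to make the $O(\tau^2)$ truncation error constant $h$-independent; this follows by differentiating \eqref{eq:ondesym semi} in time and using the stability bounds on $\M_{\mathcal{F}_h}$, $\M_{\mathcal{C}_h}$ together with smoothness of $f$.
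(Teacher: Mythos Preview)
Your overall energy-argument strategy (test against the discrete time-derivative, form a nonnegative quantity involving $\|\gd(\cdot)\|_{\mathcal{F}_h}^2$, close with discrete Gronwall) is exactly what the paper does, but the \emph{splitting} is genuinely different, and that difference matters.

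The paper does \emph{not} pass through the semi-discrete solution $u_h(t)$. It splits
\[
u_h^n - u(t_n)^I = \bigl(u_h^n - \mathcal{P}_h u(t_n)\bigr) + \bigl(\mathcal{P}_h u(t_n) - u(t_n)^I\bigr)=:\sigma^n+\rho^n,
\]
and derives the error equation for $\sigma^n$ by inserting $\mathcal{P}_h u(t_n)$ into \eqref{eq:derivata seconda} and using the identity $\Delta_h\mathcal{P}_h u=(\operatorname{div}\K\nabla u)^I$. All Taylor expansions are then performed on the \emph{continuous} solution $u$, whose $C^3([0,T],H^2)$ regularity is part of the hypothesis. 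Testing against $\delta^{n+1/2}+\delta^{n-1/2}$ with $\delta^{n+1/2}=(\sigma^{n+1}-\sigma^n)/\tau$, they control a discrete energy $\Gamma^n=\|\delta^{n+1/2}\|^2_{\mathcal{C}_h}+\|\sigma^{n+1}\|^2_{\mathcal{C}_h}+\|\sigma^n\|^2_{\mathcal{C}_h}-[\Delta_h\sigma^{n+1/2},\sigma^{n+1/2}]_{\mathcal{C}_h}$ and finish with discrete Gronwall. Space and time errors are handled in one shot.

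Your route has a gap precisely at what you call a ``secondary technical point''. The truncation error of \eqref{eq:derivata seconda} at the semi-discrete solution contains, at order $\tau^2$, the quantities $u_h^{(4)}$ (from the second difference quotient) and $\Delta_h u_{h,tt}$ (from the time-averaged operator). For the constant to be $h$-independent you need $\|\Delta_h u_{h,tt}\|_{\mathcal{C}_h}=\|\Delta_h^2 u_h-\Delta_h f'(u_h)\|_{\mathcal{C}_h}$ bounded uniformly in $h$; since $\|\Delta_h\|\sim h^{-2}$ by Lemma~\ref{lemma2}, this is not a consequence of the stability of $\M_{\mathcal{C}_h},\M_{\mathcal{F}_h}$ alone. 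A naive attempt via $\Delta_h^2 u_0^I=\Delta_h^2(u_0^I-\mathcal{P}_h u_0)+\Delta_h(\operatorname{div}\K\nabla u_0)^I$ already gives a first term of size $h^{-4}\cdot h^2=h^{-2}$. This is exactly the difficulty the paper's choice of $\mathcal{P}_h u(t_n)$ (rather than $u_h(t_n)$) as the intermediate comparison is designed to sidestep: by keeping all Taylor expansions on $u$, the $h$-dependence of the truncation constants never arises.
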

\begin{proof}
Let us split the discrete error in the usual form 
\begin{equation}
\label{eq:thetarhodis}
u_h^n -u(t_n)^I = \left(u_h^n - \mathcal{P}_h \, u(t_n) \right) + \left( \mathcal{P}_h \, u(t_n)- u(t_n)^I \right) = \sigma^n + \rho^n.
\end{equation}
From  Lemma \ref{lemma1},  using the same argument in  \eqref{eq:rho}, we get
\begin{equation}
\label{eq:rhodis}
\begin{split}
\| \rho^n \|_{\mathcal{C}_h} &= \| \mathcal{P}_h \, u(t_n)- u(t_n)^I \|_{\mathcal{C}_h} \leq C\, h^2 \left( |u_0|_{H^2(\Omega)} + |u_t|_{H^1(0, t_n, H^2(\Omega))}\right).
\end{split}
\end{equation}
The analysis of the term $\sigma^n$ is more involved. We start by considering the first time step $n=0$ and we observe that using again   Lemma \ref{lemma1} it holds that  
\begin{equation}
\label{eq:sigmadis0}
\|\sigma^0\|_{\mathcal{C}_h} = \|u_h^0 - \mathcal{P}_h \, u(0)\|_{\mathcal{C}_h} = \|u_{h,0} - \mathcal{P}_h \, u_0\|_{\mathcal{C}_h} \leq C \, h^2 \, |u_0|_{H^2(\Omega)}.
\end{equation}
Let us analyse the first time step $t_1=\tau$. Using the regularity assumptions on  the solution $u$ in the time variable, we have that for all $x \in \Omega$ it holds 
\begin{equation}
\label{eq:utau}
\begin{split}
u(x,\tau) &=  u_0 + \tau \, v_0(x) + \frac{\tau^2}{2} \, u_{tt}\left(x, \frac{\tau}{2}\right) +R \\
&= u_0 + \tau \, v_0(x) + \frac{\tau^2}{2} \, \left({\rm div} \K \nabla \left(u\left(x, \frac{\tau}{2}\right)\right) - f'\left( x, \frac{\tau}{2}\right)  \right)+R
\end{split}
\end{equation}
where $R=O(\tau^3)$ is the rest in the Taylor expansion of $u(x, \cdot)$. By definition   \eqref{eq:onde discreto} with $n=0$,
\begin{displaymath}
u_h^1 = u_{h,0} + \tau \, v_{h,0} + \frac{\tau^2}{2}  \left( \Delta_h\, u_h^{1/2} - f'\left(u_h^{1/2}\right)\right).
\end{displaymath}
Then, using  \eqref{eq:lapint}, recalling that $u$ is the solution of  \eqref{eq:onde} and interpolating  \eqref{eq:utau} in $\mathcal{C}_h$ it follows that:
\begin{equation*}
\begin{split}
2&\sigma^{1/2} - \frac{\tau^2}{2}\Delta_h\, \sigma^{1/2} = (u_h^1 - \mathcal{P}_h \, u(\tau)) + \sigma^0  - \frac{\tau^2}{2} \Delta_h \left( u_h^{1/2} - \mathcal{P}_h \, u(\tau/2)\right)\\ 
&=u_{h,0} + \tau \, v_{h,0} - \frac{\tau^2}{2} f'\left(u_h^{1/2}\right) - \mathcal{P}_h\, u(\tau) +  \frac{\tau^2}{2} \bigl( {\rm div} \K \nabla \left(u\left(\tau/2\right)\right) \bigr)^I+ \sigma^0 \\
& = u_{h,0} + \tau \, v_{h,0} - \frac{\tau^2}{2} f'\left(u_h^{1/2}\right) - \mathcal{P}_h\, u(\tau) +  \frac{\tau^2}{2} \bigl( u_{tt}\left(\tau/2\right) + f'\left(u\left(\tau/2\right)\right) \bigr)^I+ \sigma^0 \\
& = u_{h,0} + \tau \, v_{h,0} + \frac{\tau^2}{2} \left(f'\left(u\left(\tau/2\right)\right)^I -  f'\left(u_h^{1/2}\right) \right) - \mathcal{P}_h\, u(\tau) + \bigl( u(\tau) - u_0 - \tau \, v_0 -R\bigr)^I  + \sigma^0 \\
& = \frac{\tau^2}{2} \left(f'\left(u\left(\tau/2\right)\right)^I -  f'\left(u_h^{1/2}\right) \right) - (\mathcal{P}_h\, u(\tau) - u(\tau)^I) - R^I + \sigma^0. 
\end{split}
\end{equation*}
Let us compute the scalar product  of both sides of the previous equation with $\sigma^{1/2}$, obtaining
\begin{equation}
\label{eq:utile}
\begin{split}
2 \left[\sigma^{1/2}, \sigma^{1/2} \right]_{\mathcal{C}_h} &- \frac{\tau^2}{2} \left[ \Delta_h \, \sigma^{1/2}, \sigma^{1/2} \right]_{\mathcal{C}_h}=\\
& = \left[ \frac{\tau^2}{2} \left(f'\left(u\left(\tau/2\right)\right)^I -  f'\left(u_h^{1/2}\right) \right) -\bigl( \mathcal{P}_h\, u(\tau) - u(\tau)^I \bigr) - R^I + \sigma^0,\sigma^{1/2} \right]_{\mathcal{C}_h}  \\
& \leq C  \left( \frac{\tau^2}{2}\left \| u\left(\tau/2\right) - u_h^{1/2} \right\|_{\mathcal{C}_h}  + \left \|\rho^1 \right\|_{\mathcal{C}_h} + \|\sigma^0\|_{\mathcal{C}_h} + \tau^3 \right) \left \|\sigma^{1/2} \right\|_{\mathcal{C}_h} \\
& \leq C  \left( \frac{\tau^2}{2}\left \|\sigma^{1/2} \right\|_{\mathcal{C}_h} + \frac{\tau^2}{2}\left \|\rho^{1/2} \right\|_{\mathcal{C}_h} + \left \|\rho^1 \right\|_{\mathcal{C}_h} + \|\sigma^0\|_{\mathcal{C}_h} + \tau^3 \right) \left \|\sigma^{1/2} \right\|_{\mathcal{C}_h} \ .\\
\end{split}
\end{equation}
Now, since from  Lemma \ref{lemma2} it follows $- [\Delta_h \,v_h, v_h]_{\mathcal{C}_h} \geq 0$ for all $v_h$,  for small values of $\tau$ using  \eqref{eq:rhodis} and  \eqref{eq:sigmadis0}, we get
\begin{displaymath}
\left \|\sigma^{1/2}\right\|_{\mathcal{C}_h} \leq C \, h^2 \biggl( |u_0|_{H^2(\Omega)} + |u_t|_{H^1(0, \tau, H^2(\Omega))}  \biggr) \ ,
\end{displaymath}
and we can conclude that
\begin{equation}
\label{eq:sigmadis1}
\|\sigma^1\|_{\mathcal{C}_h} \leq 2 \left \|\sigma^{1/2}\right\|_{\mathcal{C}_h} +  \|\sigma^0\|_{\mathcal{C}_h} \leq C \, h^2 \biggl(  |u_0|_{H^2(\Omega)} + |u_t|_{H^1(0, \tau, H^2(\Omega))} \biggr).
\end{equation}
Now, we bound the error for a  general time step $n \geq 1$. It is easy to see that the following relations hold
\begin{gather*}
\frac{u(t_{n+1}) -2 u(t_n) + u(t_{n-1})}{\tau^2} = u_{tt}(t_n) +  \bar{R}  \ , \\
u(t_{n+1}) + 2 u(t_n) + u(t_{n-1}) = 4 u(t_n) +   \bar{R} \ ,\\
\frac{f'(u(t_{n + 1})) + f'(u(t_{n})) }{2} =  f' \left( u \left( t_{n + \frac{1}{2}} \right) \right) +  \bar{R} \ ,
\end{gather*}
where $\bar{R} = O(\tau^2)$ denotes the general rests in the Taylor expansion. Using the previous Taylor expansions, the definition of the scheme  \eqref{eq:derivata seconda} and  \eqref{eq:lapint}, and recalling that $u$ is the solution of  \eqref{eq:onde}, we have
\begin{equation}
\label{eq:sigman}
\begin{split}
& \frac{\sigma^{n+1} - 2 \sigma^n + \sigma^{n-1} }{\tau^2} - \frac{1}{2}\, \Delta_h\left( \frac{\sigma^{n+1} + 2 \sigma^n +\sigma^{n-1}}{2} \right) \\
& \quad = -\frac{1}{2} \left(f'\left(u_h^{n+\frac{1}{2}}\right) + f'\left(u_h^{n-\frac{1}{2}}\right)\right) - \mathcal{P}_h \left(\frac{u(t_{n+1}) - 2u(t_n) + u(t_{n-1})}{\tau^2} \right) +\\
& \quad \quad + \, \left(  {\rm div} \K \nabla  \left( \frac{ u(t_{n+1}) + 2u(t_n) + u(t_{n-1})}{4} \right)\right)^I = \\ 
& \quad = -\frac{1}{2} \left(f'\left(u_h^{n+\frac{1}{2}}\right) + f'\left(u_h^{n-\frac{1}{2}}\right)\right) +   \,\left(\frac{f'(u(t_{n+1})) + 2 f'(u(t_n)) + f'(u(t_{n-1}))}{4}\right)^I + \\
& \quad \quad +  \, \left( \frac{u_{tt}(t_{n+1}) + 2u_{tt}(t_n) + u_{tt}(t_{n-1})}{4} \right)^I - \mathcal{P}_h \left(\frac{u(t_{n+1}) - 2u(t_n) + u(t_{n-1})}{\tau^2} \right)\\
& \quad = \frac{1}{2} \left( f' \left( u \left( t_{n+\frac{1}{2}} \right) \right)^I  - f'\left(u_h^{n+\frac{1}{2}}\right) \right) + \frac{1}{2} \left( f' \left( u \left(t_{n-\frac{1}{2}} \right) \right)^I  - f'\left(u_h^{n-\frac{1}{2}}\right)\right) + \\
& \quad \quad +  \, \left(  u_{tt} (t_n)^I - \mathcal{P}_h \, u_{tt}(t_n)  \right) + \, \bar{R}\\
& \quad=    \alpha^{n+\frac{1}{2}} + \alpha^{n-\frac{1}{2}} -  \rho_{tt}^n + \bar{R}  \ ,
\end{split}
\end{equation}
where $\alpha^q = \frac{f'(u(t_q))^I - f'(u_h^q)}{2}$ with $q= n \pm \frac{1}{2}$. 

Now, let
\begin{displaymath}
\delta^{n+ \frac{1}{2}} := \frac{\sigma^{n+1} - \sigma^n}{\tau}
\end{displaymath}
and let us observe that the following relations hold:
\begin{equation}
\label{eq:sigmadelta}
\frac{\sigma^{n+1} - 2 \sigma^n + \sigma^{n-1}}{\tau^2} = \frac{\delta^{n+\frac{1}{2}}- \delta^{n-\frac{1}{2}}}{\tau}\ , \qquad  \delta^{n+ \frac{1}{2}} + \delta^{n- \frac{1}{2}} = 2\, \frac{\sigma^{n+ \frac{1}{2}} - \sigma^{n- \frac{1}{2}}}{\tau}. 
\end{equation}
Let us make the inner product of both sides of  \eqref{eq:sigman} with $\delta^{n+ \frac{1}{2}} + \delta^{n- \frac{1}{2}}$.   For the first term of the left-hand side, using  \eqref{eq:sigmadelta}, we get
\begin{equation}
\label{eq:1l}
\frac{1}{\tau}\, \left [ \delta^{n+\frac{1}{2}}- \delta^{n-\frac{1}{2}} , \delta^{n+ \frac{1}{2}} + \delta^{n- \frac{1}{2}} \right]_{\mathcal{C}_h} = \frac{1}{\tau} \left(\left \|\delta^{n+\frac{1}{2}} \right \|^2_{\mathcal{C}_h} - \left \|\delta^{n-\frac{1}{2}} \right\|^2_{\mathcal{C}_h} \right).
\end{equation}
For the second term of the left-hand side in  \eqref{eq:sigman}, using  \eqref{eq:sigmadelta} and since $\Delta_h$ is self-adjoint, we have
\begin{multline}
\label{eq:2l}
-\frac{1}{\tau} \, \left[\Delta_h \left(\sigma^{n+\frac{1}{2}} + \sigma^{n-\frac{1}{2}}\right), \sigma^{n+ \frac{1}{2}} - \sigma^{n- \frac{1}{2}} \right]_{\mathcal{C}_h} =\\ 
= - \frac{1}{\tau} \, \left(  \left [\Delta_h\,\sigma^{n + \frac{1}{2}}, \sigma^{n + \frac{1}{2}} \right]_{\mathcal{C}_h} - \left [\Delta_h\, \sigma^{n - \frac{1}{2}}, \sigma^{n - \frac{1}{2}} \right]_{\mathcal{C}_h}\right).
\end{multline}
To bound the right-hand side, we preliminary observe that using the same argument of the proof of  Theorem \ref{thm:convergenza} and  Lemma \ref{lemma3},  it holds that
\begin{displaymath}
\left \|  \alpha^q \right \|_{\mathcal{C}_h} = \frac{1}{2}\left \|  f'(u(t_q))^I - f'(u_h^q)\right \|_{\mathcal{C}_h} \leq C \left \|  u(t_q) - u_h^q \right \|_{\mathcal{C}_h} \leq  C \left( \left \|  \sigma^q \right \|_{\mathcal{C}_h}  + \left \|  \rho^q \right \|_{\mathcal{C}_h}  \right).
\end{displaymath}
Therefore, using the previous bound, the Cauchy-Schwartz inequality and the usual estimate in $\rho$,  we derive
\begin{equation}
\label{eq:right}
\begin{split}
&\left [  \alpha^{n+\frac{1}{2}} + \alpha^{n-\frac{1}{2}} + \rho_{tt}^n + \bar{R} , \delta^{n+ \frac{1}{2}} + \delta^{n- \frac{1}{2}}\right]_{\mathcal{C}_h} \\
&\leq C \left(  \left \|\alpha^{n+\frac{1}{2}}\right\|_{\mathcal{C}_h} +\left \| \alpha^{n-\frac{1}{2}}\right\|_{\mathcal{C}_h} + \left \|\rho_{tt}^n\right\|_{\mathcal{C}_h} + \tau^2 \right) \left\|\delta^{n+\frac{1}{2}} + \delta^{n-\frac{1}{2}} \right\|_{\mathcal{C}_h}  \\
&\leq C \left(  \left \|\alpha^{n+\frac{1}{2}}\right\|^2_{\mathcal{C}_h} +\left \| \alpha^{n-\frac{1}{2}}\right\|^2_{\mathcal{C}_h} + \left \|\rho_{tt}^n\right\|^2_{\mathcal{C}_h}  + \left\|\delta^{n+\frac{1}{2}} + \delta^{n-\frac{1}{2}} \right\|^2_{\mathcal{C}_h} + \tau^4 \right) \\
&\leq C \left( \left \|\sigma^{n+\frac{1}{2}}\right\|^2_{\mathcal{C}_h} +\left \| \sigma^{n-\frac{1}{2}}\right\|^2_{\mathcal{C}_h} + \left \|\rho^{n+\frac{1}{2}}\right\|^2_{\mathcal{C}_h} +\left \| \rho^{n-\frac{1}{2}}\right\|^2_{\mathcal{C}_h} +\left \|\rho_{tt}^n\right\|^2_{\mathcal{C}_h}  +  \left\|\delta^{n+\frac{1}{2}} + \delta^{n-\frac{1}{2}} \right\|^2_{\mathcal{C}_h} + \tau^4\right) \\ 
&\leq C \left( \left \|\sigma^{n + 1}\right\|^2_{\mathcal{C}_h} + 2 \left \| \sigma^{n}\right\|^2_{\mathcal{C}_h} + \left \| \sigma^{n-1}\right\|^2_{\mathcal{C}_h}   +  \left\|\delta^{n+\frac{1}{2}} \right\|^2_{\mathcal{C}_h} + \left \|\delta^{n-\frac{1}{2}} \right\|^2_{\mathcal{C}_h} + \tau^4 + h^4 \right).
\end{split}
\end{equation}
Collecting  \eqref{eq:1l},  \eqref{eq:2l} and  \eqref{eq:right} in  \eqref{eq:sigman}, we obtain
\begin{multline}
\label{eq:leftright}
\frac{1}{\tau} \, \left( \left \|\delta^{n+\frac{1}{2}} \right \|^2_{\mathcal{C}_h} -\left \|\delta^{n-\frac{1}{2}} \right\|^2_{\mathcal{C}_h} -  \left [\Delta_h\,\sigma^{n + \frac{1}{2}}, \sigma^{n + \frac{1}{2}} \right]_{\mathcal{C}_h} + \left [\Delta_h\, \sigma^{n - \frac{1}{2}}, \sigma^{n - \frac{1}{2}} \right]_{\mathcal{C}_h} \right)\\
\leq C  \, \left( \left \|\sigma^{n + 1}\right\|^2_{\mathcal{C}_h} + 2 \left \| \sigma^{n}\right\|^2_{\mathcal{C}_h} + \left \| \sigma^{n-1}\right\|^2_{\mathcal{C}_h}   +  \left\|\delta^{n+\frac{1}{2}} \right\|^2_{\mathcal{C}_h} + \left \|\delta^{n-\frac{1}{2}} \right\|^2_{\mathcal{C}_h} + \tau^4 + h^4 \right).
\end{multline}
Moreover,  \eqref{eq:sigmadelta} and some simple calculations give:
\begin{equation}
\label{eq:trick}
\begin{split}
\frac{1}{\tau} \, \left( \|\sigma^{n+1}\|^2_{\mathcal{C}_h}  - \|\sigma^{n-1}\|^2_{\mathcal{C}_h}  \right) &= \left[\sigma^{n+1} + \sigma^{n-1}, \delta^{n+ \frac{1}{2}} + \delta^{n- \frac{1}{2}} \right] \\
& \leq C \left( \|\sigma^{n+1}\|^2_{\mathcal{C}_h} + 2\|\sigma^n\|^2_{\mathcal{C}_h}  + \|\sigma^{n-1}\|^2_{\mathcal{C}_h} + \left\|\delta^{n+ \frac{1}{2}} \right \|^2_{\mathcal{C}_h} + \left\|\delta^{n- \frac{1}{2}} \right \|^2_{\mathcal{C}_h} \right).
\end{split}
\end{equation}
Now let us define
\begin{displaymath}
\Gamma^n := \left\|\delta^{n+ \frac{1}{2}} \right \|^2_{\mathcal{C}_h} +  \|\sigma^{n+1}\|^2_{\mathcal{C}_h} + \|\sigma^n\|^2_{\mathcal{C}_h} - \left[ \Delta_h\, \sigma^{n+\frac{1}{2}} , \sigma^{n+ \frac{1}{2}}  \right]_{\mathcal{C}_h}\ .
\end{displaymath}
Using  the estimates  \eqref{eq:leftright} and  \eqref{eq:trick}, recalling that the operator $-\Delta_h$ is positive definite, we derive that
\begin{displaymath}
\frac{\Gamma^n - \Gamma^{n-1}}{\tau} \leq C\, (h^2 + \tau^2)^2  + C \,(\Gamma^n + \Gamma^{n-1})\ ,
\end{displaymath}
and, by using the discrete Gronwall inequality,  we obtain
\begin{displaymath}
\Gamma^n \leq \left( \Gamma^0 + \sum_{k=1}^n \tau \, (h^2 + \tau^2)^2 \right) e^{\tau \, 4C t_n} \ .
\end{displaymath} 
Now, using analogous arguments in  \eqref{eq:utile} and recalling bounds  \eqref{eq:sigmadis0},  \eqref{eq:sigmadis1}, we obtain
\begin{displaymath}
\Gamma^0 \leq C(h^2 + \tau^2)^2,
\end{displaymath}
and thus
\begin{displaymath}
\|\sigma^n\|^2_{\mathcal{C}_h} \leq \Gamma^n \leq C(h^2 + \tau^2)^2  e^{\tau \, 4C t_n}.
\end{displaymath}
Hence, since $t_n \leq T$, the above bounds gives
\begin{equation}
\label{eq:sigmafinal}
\|\sigma^n\|_{\mathcal{C}_h} \leq C(h^2 + \tau^2)\ ,
\end{equation}
for all $n=1,\ldots,N$, and collecting  \eqref{eq:rhodis} and  \eqref{eq:sigmafinal} in  \eqref{eq:thetarhodis} we get the thesis.
\end{proof}

\subsection{Conservation laws for the fully discrete problem}
\label{sub:5.2}

The following result shows how the fully discrete method, built combining  the MFD method and the symplectic implicit midpoint scheme,  preserves, within an order $\tau^2$ of approximation, the Hamiltonian functional. Using classical results on the symplectic integrator methosd (see for instance \cite{MR1904823}) we can state the following theorem about the long time stability of the Hamiltonian.

\begin{teorema}
\label{thm:conservazione ham}
Let $\left(u_h^n, v_h^n \right)$ be the sequence generated by system  \eqref{eq:onde discreto}. Then if $T \leq e^{\gamma/\tau}\,\tau^2$, for a suitable positive constant $\gamma$, it holds that: 
\begin{equation}
\label{eq:conservazione ham}
\left | \mathcal{H}_h\left[u_h^N, v_h^N \right] -  \mathcal{H}_h\left[u_h^0, v_h^0  \right]\right| \leq C \, \tau^2 \ .
\end{equation}
\end{teorema}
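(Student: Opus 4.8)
The plan is to derive \eqref{eq:conservazione ham} from the backward error analysis of symplectic one-step methods, exactly as suggested by \cite{MR1904823}, the only new point being that the Hamiltonian vector field fed to the theory is the \emph{mimetic} one. First I would rewrite the semi-discrete system in canonical form. As recalled in Section \ref{sec:4}, \eqref{eq:ondesym semi} reads $\dot y_h = \mathcal{J}_{N_c}\,\M_{\mathcal{C}_h}^{-1}\nabla\mathcal{H}_h[y_h]$ with $y_h=(u_h,v_h)$ and $\M_{\mathcal{C}_h}^{-1}$ acting blockwise. Setting $\widehat{y}_h := \mathrm{diag}(\M_{\mathcal{C}_h}^{1/2},\M_{\mathcal{C}_h}^{1/2})\,y_h$ and $\widehat{\mathcal{H}}(\widehat{y}_h):=\mathcal{H}_h[\mathrm{diag}(\M_{\mathcal{C}_h}^{-1/2},\M_{\mathcal{C}_h}^{-1/2})\widehat{y}_h]$, a short computation (using that $\M_{\mathcal{C}_h}$ is diagonal and SPD, and that $\mathrm{diag}(\M_{\mathcal{C}_h}^{1/2},\M_{\mathcal{C}_h}^{1/2})$ commutes through the block structure of $\mathcal{J}_{N_c}$) turns the system into the canonical one $\dot{\widehat{y}}_h=\mathcal{J}_{N_c}\nabla\widehat{\mathcal{H}}(\widehat{y}_h)$. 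Since the implicit midpoint rule is equivariant under affine changes of coordinates, the sequence $\{(u_h^n,v_h^n)\}$ produced by \eqref{eq:onde discreto} is carried by this rescaling onto the implicit-midpoint iterates of the canonical system, and crucially $\widehat{\mathcal{H}}(\widehat{y}_h^{\,n})=\mathcal{H}_h[u_h^n,v_h^n]$ for every $n$, so it suffices to prove the estimate for the canonical problem.

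Next I would invoke the classical result. The implicit midpoint rule is a symplectic and \emph{symmetric} method of order $2$, and $\mathcal{J}_{N_c}\nabla\widehat{\mathcal{H}}$ is real-analytic on $\R^{2N_c}$ provided $f$ is analytic; moreover $\widehat{\mathcal{H}}$ is coercive (by Lemma \ref{lemma2} the operator $-\Delta_h$ is positive definite, so $[\gd\,u_h,\gd\,u_h]_{\mathcal{F}_h}$ controls $\|u_h\|_{\mathcal{C}_h}$, and $f$ is bounded below for the admissible nonlinearities), which keeps the iterates in a fixed compact set for $\tau$ small by a standard bootstrap. Hence backward error analysis furnishes a modified Hamiltonian $\widehat{\mathcal{H}}_\tau$ — the optimally truncated formal series, proceeding in even powers of $\tau$ by symmetry, so that $\widehat{\mathcal{H}}_\tau=\widehat{\mathcal{H}}+O(\tau^2)$ — and constants $C_0,\gamma_0>0$ such that $|\widehat{\mathcal{H}}_\tau(\widehat{y}_h^{\,n})-\widehat{\mathcal{H}}_\tau(\widehat{y}_h^{\,0})|\le C_0\,t_n\,e^{-\gamma_0/\tau}$ for $t_n=n\tau$. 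Combining with the triangle inequality yields, for every $n$,
\begin{displaymath}
\bigl|\mathcal{H}_h[u_h^n,v_h^n]-\mathcal{H}_h[u_h^0,v_h^0]\bigr|=\bigl|\widehat{\mathcal{H}}(\widehat{y}_h^{\,n})-\widehat{\mathcal{H}}(\widehat{y}_h^{\,0})\bigr|\le C\,\tau^2+C_0\,t_n\,e^{-\gamma_0/\tau}.
\end{displaymath}

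Finally I would specialise to $n=N$, i.e. $t_N=T$: choosing $\gamma:=\gamma_0/2$ in the statement, the hypothesis $T\le e^{\gamma/\tau}\tau^2$ gives $C_0\,T\,e^{-\gamma_0/\tau}\le C_0\,\tau^2 e^{(\gamma-\gamma_0)/\tau}\le C\,\tau^2$ for $\tau$ below a threshold, whence \eqref{eq:conservazione ham}. The step I expect to be the main obstacle is making this invocation fully rigorous in the present setting: the constants $C_0,\gamma_0$ (hence $\gamma,C$ in the statement) depend on bounds for the vector field and its derivatives, thus on $\|\Delta_h\|\sim h^{-2}$ (Lemma \ref{lemma2}) and on the dimension $N_c$, so they are \emph{not} uniform in $h$ — consistent with the statement, but it must be said explicitly — and one must either assume $f$ analytic or accept the slightly weaker (still long-time) version of the bound available for $C^\infty$ data. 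I would also point out why the modified-Hamiltonian machinery is really needed: the naive telescoping estimate, which uses that for the midpoint rule $\mathcal{H}_h$ changes by only $O(\tau^3)$ per step (the quadratic Taylor terms about the midpoint cancel and $\nabla\mathcal{H}_h$ is paired with the skew matrix $\mathcal{J}_{N_c}\M_{\mathcal{C}_h}^{-1}$), gives merely $O(T\tau^2)$ over $[0,T]$ and is useless in the exponentially long time regime covered by the theorem.
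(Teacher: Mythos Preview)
Your proposal is correct and follows exactly the route the paper intends: the paper does not give a proof of this theorem at all, it simply states that the result follows from ``classical results on the symplectic integrator methods'' in \cite{MR1904823}. Your sketch --- reducing to canonical form via the $\M_{\mathcal{C}_h}^{1/2}$ rescaling, invoking backward error analysis for the symplectic implicit midpoint rule, and absorbing the exponentially small drift under the hypothesis $T\le e^{\gamma/\tau}\tau^2$ --- is precisely how that citation is meant to be unpacked, and you have in fact supplied considerably more detail than the paper itself (including the honest caveat that the constants depend on $h$ through $\|\Delta_h\|$, which the paper leaves implicit).
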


\begin{osservazione}
\label{oss:esatta}
It is well known that the SIM preserves the quadratic first integrals. Therefore if the load term $f$ is quadratic, i.e. $f(s) = ks^2$, with $k$ constant the Hamiltonian is exactly preserved along the solutions.
\end{osservazione}

By collecting the estimates of  Theorem \ref{thm:conservazione1} and Theorem \ref{thm:conservazione ham} we can provide a bound for the  error in the Hamiltonian of the fully discrete procedure, stemming from the MFD discretization in  space and the SIM integration in time.

\begin{teorema}
Let $(u(t), v(t))$ be the solution of problem  \eqref{eq:ondesym} and let $\left(u_h^n, v_h^n \right)$ be the sequence generated by system  \eqref{eq:onde discreto}. Then, it follows that
\begin{equation}
\label{eq:conservazione total ham}
\left | \mathcal{H}_h\left[u_h^N, v_h^N \right] -  \mathcal{H}\left[u(t_N), v(t_N) \right]\right| \leq C  ( \tau^2 \  + h^2).
\end{equation}
\end{teorema}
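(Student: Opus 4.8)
The plan is to obtain \eqref{eq:conservazione total ham} as an immediate consequence of the triangle inequality, splitting the total error into a purely temporal contribution and a purely spatial one, each of which has already been controlled in this section. Concretely, I would insert the intermediate quantity $\mathcal{H}_h[u_h^0, v_h^0]$ and write
\begin{equation*}
\mathcal{H}_h[u_h^N, v_h^N] - \mathcal{H}[u(t_N), v(t_N)] = \Bigl( \mathcal{H}_h[u_h^N, v_h^N] - \mathcal{H}_h[u_h^0, v_h^0] \Bigr) + \Bigl( \mathcal{H}_h[u_h^0, v_h^0] - \mathcal{H}[u(t_N), v(t_N)] \Bigr).
\end{equation*}
The first parenthesis is precisely the quantity estimated in Theorem \ref{thm:conservazione ham}, hence it is bounded by $C\tau^2$, under the (mild) restriction $T \leq e^{\gamma/\tau}\tau^2$ inherited from that result, which I would either state explicitly in the hypotheses or keep as a standing assumption.

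For the second parenthesis I would invoke the two conservation laws established earlier. Since $\mathcal{H}$ is a first integral of \eqref{eq:ondesym}, one has $\mathcal{H}[u(t_N), v(t_N)] = \mathcal{H}[u_0, v_0]$; since $\mathcal{H}_h$ is a first integral of \eqref{eq:ondesym semi} by \eqref{eq:semi global low}, one has $\mathcal{H}_h[u_h(t_N), v_h(t_N)] = \mathcal{H}_h[u_{h,0}, v_{h,0}]$. Recalling that the fully discrete scheme \eqref{eq:onde discreto} is initialised with $u_h^0 = u_{h,0}$ and $v_h^0 = v_{h,0}$, the second parenthesis therefore equals $\mathcal{H}_h[u_h(t_N), v_h(t_N)] - \mathcal{H}[u(t_N), v(t_N)]$, which is exactly the left-hand side of \eqref{eq:hamerrorsemi} in Theorem \ref{thm:conservazione1}, hence bounded by $C h^2$ with $C$ depending only on the regularity of $u_0$ and $v_0$. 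Combining the two bounds via the triangle inequality yields \eqref{eq:conservazione total ham}.

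There is essentially no hard step here: once Theorems \ref{thm:conservazione1} and \ref{thm:conservazione ham} are in place the statement follows by a one-line triangle-inequality argument. The only point that requires a little care is the bookkeeping with the invariances — using the conservation of $\mathcal{H}$ and of $\mathcal{H}_h$ to move the comparison in Theorem \ref{thm:conservazione1} from time $t_N$ back to the initial time, where it matches the initial data of the fully discrete scheme — together with carrying along the $T$-restriction from Theorem \ref{thm:conservazione ham}. Neither of these is a genuine obstacle.
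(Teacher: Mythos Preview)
Your proposal is correct and is exactly the argument the paper has in mind: the theorem is stated without proof, preceded by the sentence ``By collecting the estimates of Theorem \ref{thm:conservazione1} and Theorem \ref{thm:conservazione ham}\ldots'', i.e.\ the same triangle-inequality combination you describe. The only minor simplification is that the second parenthesis is directly $\mathcal{H}_{h,0}-\mathcal{H}_0$ (since $u_h^0=u_0^I$, $v_h^0=v_0^I$ and $\mathcal{H}[u(t_N),v(t_N)]=\mathcal{H}_0$), so you can invoke Lemma~\ref{lemma3} or Theorem~\ref{thm:conservazione1} without passing through the semi-discrete solution at time $t_N$.
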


In  Section \ref{sec:3} we have introduced the semi-discrete Energy density conservation law.  Now we analyse the effect of time discretization in the semi-discrete Energy density conservation law.

\begin{teorema}
\label{thm: discrete energy conservation}
Let $\left(u_h^n, v_h^n \right)$ be the sequence generated by system  \eqref{eq:onde discreto}, and let for all $c \in \mathcal{T}_h$ and for all $n$
\begin{displaymath}
E_{h,c}(u_h^n, v_h^n) := \frac{1}{2} |c| \left(v^n_{h,c} \right)^2 + \frac{1}{2} \left [({\gd} \,u_h^n)_c , ({\gd} \,u_h^n)_c \right]_{\mathcal{F}_{h,c}} + |c|\,f(u^n_{h,c})
\end{displaymath}
and
\begin{displaymath}
F_{h,c}\left (u_h^{n }, v_h^{n } \right) = - |c|\left (\Delta_h\,u^{n }_{h} \right)_c \, v^{n }_{h,c} -  \left[\left({\gd}\, v^{n }_h \right)_c, 	\left({\gd}\, u^{n }_h \right)_c \right]_{\mathcal{F}_{h,c}}.
\end{displaymath}
Then, the following estimate holds for all $n$
\begin{equation}
\label{eq:discrete energy conservation}
\left | \frac{E_{h,c}(u_h^{n+1}, v_h^{n+1}) - E_{h,c}(u_h^n, v_h^n)} {\tau} + F_{h,c}\left (u_h^{n + \frac{1}{2}}, v_h^{n + \frac{1}{2}} \right) \right| \leq C |c|\, \tau^2 \ .
\end{equation}
\end{teorema}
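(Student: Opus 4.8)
The plan is to derive an \emph{exact} cell-wise identity between the two sides of \eqref{eq:discrete energy conservation} and a cubic remainder, and then to bound that remainder by $C|c|\tau^2$. To this end I would expand the increment $E_{h,c}(u_h^{n+1},v_h^{n+1})-E_{h,c}(u_h^n,v_h^n)$ term by term, using the two defining relations of the scheme \eqref{eq:onde discreto} in the compact form $u_h^{n+1}-u_h^n=\tau\,v_h^{n+\frac{1}{2}}$ and $v_h^{n+1}-v_h^n=\tau\bigl(\Delta_h u_h^{n+\frac{1}{2}}-f'(u_h^{n+\frac{1}{2}})\bigr)$, where $w_h^{n+\frac{1}{2}}:=(w_h^{n+1}+w_h^n)/2$ and $f'$ acts componentwise. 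Writing $a^2-b^2=(a-b)(a+b)$ on the kinetic part gives $\tfrac{1}{2}|c|\bigl((v_{h,c}^{n+1})^2-(v_{h,c}^n)^2\bigr)=\tau|c|\,v_{h,c}^{n+\frac{1}{2}}\bigl((\Delta_h u_h^{n+\frac{1}{2}})_c-f'(u_{h,c}^{n+\frac{1}{2}})\bigr)$; the bilinearity and symmetry of $[\cdot,\cdot]_{\mathcal{F}_{h,c}}$ together with the linearity of $\gd$ (and again $u_h^{n+1}-u_h^n=\tau v_h^{n+\frac{1}{2}}$) turn the gradient part into $\tau\,[(\gd v_h^{n+\frac{1}{2}})_c,(\gd u_h^{n+\frac{1}{2}})_c]_{\mathcal{F}_{h,c}}$.

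For the potential part I would apply the midpoint quadrature rule, i.e.\ a one-dimensional Taylor expansion of $f$ about $u_{h,c}^{n+\frac{1}{2}}$: since the two increments to the midpoint are $\pm(u_{h,c}^{n+1}-u_{h,c}^n)/2$, the first-order terms survive, the second-order terms cancel, and $f(u_{h,c}^{n+1})-f(u_{h,c}^n)=f'(u_{h,c}^{n+\frac{1}{2}})(u_{h,c}^{n+1}-u_{h,c}^n)+\tfrac{1}{24}f'''(\eta_c)(u_{h,c}^{n+1}-u_{h,c}^n)^3$ for some $\eta_c$ between $u_{h,c}^n$ and $u_{h,c}^{n+1}$. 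Using $u_{h,c}^{n+1}-u_{h,c}^n=\tau v_{h,c}^{n+\frac{1}{2}}$ this equals $\tau|c|\,f'(u_{h,c}^{n+\frac{1}{2}})v_{h,c}^{n+\frac{1}{2}}+\tfrac{\tau^3}{24}|c|\,f'''(\eta_c)(v_{h,c}^{n+\frac{1}{2}})^3$. Adding the three contributions, the terms $\pm\tau|c|f'(u_{h,c}^{n+\frac{1}{2}})v_{h,c}^{n+\frac{1}{2}}$ cancel identically, and what remains coincides exactly with $-\tau\,F_{h,c}(u_h^{n+\frac{1}{2}},v_h^{n+\frac{1}{2}})$ — here one uses only that $\gd$ and $\Delta_h$ are linear, so that $F_{h,c}$ evaluated at the midpoint states is $-|c|(\Delta_h u_h^{n+\frac{1}{2}})_c v_{h,c}^{n+\frac{1}{2}}-[(\gd v_h^{n+\frac{1}{2}})_c,(\gd u_h^{n+\frac{1}{2}})_c]_{\mathcal{F}_{h,c}}$ — plus the cubic remainder. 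Dividing by $\tau$ yields the identity
\[
\frac{E_{h,c}(u_h^{n+1},v_h^{n+1})-E_{h,c}(u_h^n,v_h^n)}{\tau}+F_{h,c}\!\left(u_h^{n+\frac{1}{2}},v_h^{n+\frac{1}{2}}\right)=\frac{\tau^2}{24}\,|c|\,f'''(\eta_c)\,\bigl(v_{h,c}^{n+\frac{1}{2}}\bigr)^3 ,
\]
so that \eqref{eq:discrete energy conservation} reduces to bounding the right-hand side by $C|c|\tau^2$. This is consistent with Remark \ref{oss:esatta}: when $f$ is quadratic, $f'''\equiv0$ and the discrete energy law holds exactly.

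The genuine obstacle is the uniform control of this remainder, i.e.\ a bound $|f'''(\eta_c)|\,|v_{h,c}^{n+\frac{1}{2}}|^3\le C$ with $C$ independent of $h$, $\tau$, $n$ and $c$. Since $f$ is smooth and $\eta_c$ lies between the nodal values of $u_h^n$ and $u_h^{n+1}$, it suffices to have a uniform a priori bound on the nodal values of the fully discrete solution $(u_h^n,v_h^n)$: this confines $\eta_c$ and $v_{h,c}^{n+\frac{1}{2}}$ to a fixed compact set on which $f'''$ is bounded, and the constant $C$ then depends only on that bound and on $f$. Such an $L^\infty$-type bound can be obtained from the convergence estimate of Theorem \ref{them:convergenza full} — complemented by the analogous estimate for $v_h^n$, proved by the same energy argument — together with the uniform boundedness of the exact solution $u\in C^3([0,T],H^2(\Omega))$, recalling that in two dimensions $H^2(\Omega)\hookrightarrow L^\infty(\Omega)$; alternatively, under a mild restriction relating $\tau$ and $h$ and the mesh-regularity assumptions \textbf{(M1)}--\textbf{(M2)}, an inverse inequality turns the $\mathcal{C}_h$-error bound into a pointwise bound on $(u_h^n,v_h^n)$. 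Once this ingredient is in place, all the remaining steps are the routine algebraic manipulations sketched above.
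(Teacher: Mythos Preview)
Your approach is essentially identical to the paper's: both expand the energy increment term by term, use the scheme relations $u_h^{n+1}-u_h^n=\tau v_h^{n+\frac12}$ and $v_h^{n+1}-v_h^n=\tau(\Delta_h u_h^{n+\frac12}-f'(u_h^{n+\frac12}))$ together with the factorization $a^2-b^2=(a-b)(a+b)$, and reduce everything to the potential residual $|c|\bigl(\tau^{-1}(f(u_{h,c}^{n+1})-f(u_{h,c}^n))-v_{h,c}^{n+\frac12}f'(u_{h,c}^{n+\frac12})\bigr)$, which is then handled by a midpoint Taylor expansion. Your explicit remainder $\tfrac{\tau^2}{24}|c|f'''(\eta_c)(v_{h,c}^{n+\frac12})^3$ and your discussion of the uniform $L^\infty$ control needed to make the constant $C$ independent of $h,\tau,n,c$ are in fact more careful than the paper, which simply writes the remainder as a generic $O(\tau^3)$ without addressing that point.
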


\begin{proof}
We observe that, using  \eqref{eq:onde discreto}, it follows that
\begin{multline}
\label{eq:discrete cons1}
\frac{1}{2 \tau} \left( \left [({\gd} \,u_h^{n+1})_c , ({\gd} \,u_h^{n+1})_c \right]_{\mathcal{F}_{h,c}} -  \left [({\gd} \,u_h^n)_c , ({\gd} \,u_h^n)_c \right]_{\mathcal{F}_{h,c}}\right)= \\
= \frac{1}{2 \tau}\left(\left [({\gd} \,(u_h^{n+1} - u_h^n))_c , ({\gd} \,(u_h^{n+1} + u_h^n))_c \right]_{\mathcal{F}_{h,c}}\right) =\\
= \left[\left({\gd}\, v^{n + \frac{1}{2}}_h \right)_c, 	\left({\gd}\, u^{n + \frac{1}{2}}_h \right)_c \right]_{\mathcal{F}_{h,c}}.
\end{multline}
and 
\begin{equation}
\label{eq:discrete cons2}
\begin{split}
\frac{\left(v^{n+1}_{h,c} \right)^2 - \left(v^n_{h,c} \right)^2}{2 \tau} &=v_{h,c}^{n+ \frac{1}{2}}  \frac{v^{n+1}_{h,c} - v^{n}_{h,c}}{\tau} =\\
& = v_{h,c}^{n+ \frac{1}{2}} \left( \Delta_h\, u_h^{n + \frac{1}{2}} \right)_c  - v_{h,c}^{n+ \frac{1}{2}} f'\left( u_{h,c}^{n+\frac{1}{2}}\right).
\end{split}
\end{equation}
Therefore, by collecting  \eqref{eq:discrete cons1} and  \eqref{eq:discrete cons2}, we get
\begin{multline}
\label{eq:discrete con3}
\frac{E_{h,c}(u_h^{n+1}, v_h^{n+1}) - E_{h,c}(u_h^n, v_h^n)} {\tau} +  F_{h,c}\left (u_h^{n + \frac{1}{2}}, v_h^{n + \frac{1}{2}} \right) =\\ 
= |c| \left( \frac{f(u_{h,c}^{n+1}) - f(u_{h,c}^n)}{\tau} - v_{h,c}^{n+ \frac{1}{2}} f'\left( u_{h,c}^{n+\frac{1}{2}}\right)\right) .
\end{multline}
Now, by the Taylor expansion, we derive
\begin{displaymath}
f(u_{h,c}^{n+1}) - f(u_{h,c}^n) = (u_{h,c}^{n+1} - u_{h,c}^n) f'\left( u_{h,c}^{n+\frac{1}{2}}\right) + R \tau^3 = \tau\, v_{h,c}^{n+ \frac{1}{2}}  f'\left( u_{h,c}^{n+\frac{1}{2}}\right) + R
\end{displaymath}
where $R = O(\tau^3)$ denotes the rest, and thus
\begin{displaymath}
\frac{E_{h,c}(u_h^{n+1}, v_h^{n+1}) - E_{h,c}(u_h^n, v_h^n)} {\tau} +  F_{h,c}\left (u_h^{n + \frac{1}{2}}, v_h^{n + \frac{1}{2}} \right) = R |c| \tau^2.
\end{displaymath}
\end{proof}



\section{Numerical tests}
\label{sec:6}

In the present section we present some numerical results for the fully discrete case, i.e. SIM coupled with the MFD spatial discretization.  The convergence of MFD has  been evaluated in the discrete relative $L^2(\Omega)$ norm of the difference between the interpolant $u^I \in \mathcal{C}_h$ of the exact solution $u$ and the numerical solution $u_h$ at the final time $T$, i.e.
\begin{displaymath}
E_{h,\tau}:= \frac {\| u^I(T) - u_{h,N}\|_{\mathcal{C}_h}}{\|u^I(T)\|_{\mathcal{C}_h}}.
\end{displaymath}
Moreover we tested the total error in the Hamiltonian functional at the final step $N$, among the discrete solution and the continuous solution, that is:
\begin{displaymath}
\sigma_{h,\tau}:= \left | \mathcal{H}_h[u_h^N, v_h^N] - \mathcal{H}[u_0, v_0]\right|.
\end{displaymath}
We tested also the conservation of the Hamiltonian functional with respect to time integration, that is
\begin{displaymath}
\delta_{h,\tau}:= \left | \mathcal{H}_h[u_h^N, v_h^N] - \mathcal{H}_h[u_h^0, v_h^0]\right|
\end{displaymath}  
and the error in Energy density conservation law  that is:
\begin{displaymath}
\epsilon_{h,\tau}:= \max_{c \in \mathcal{T}_h} \, \left |\frac{E_{h,c}(u_h^{N}, v_h^{N}) - E_{h,c}(u_h^{N-1}, v_h^{N-1})} {\tau} + F_{h,c}\left (u_h^{N - \frac{1}{2}}, v_h^{N - \frac{1}{2}} \right) \right|.
\end{displaymath}

We have considered the spatial domain $\Omega = [0,1] \times [0,1] \subseteq \R^2$, and a general sequence of \textbf{Voronoi meshes} with $h=0.2, 0.1, 0.05, 0.025$ (see Figure \ref{fig:2}), and $\tau = 0.1, 0.05, 0.025, 0.0125$. For the generation of the Voronoi meshes we used the code Polymesher in \cite{TPPM12}.

\begin{figure}[!h]
\centering
\includegraphics[scale=0.25]{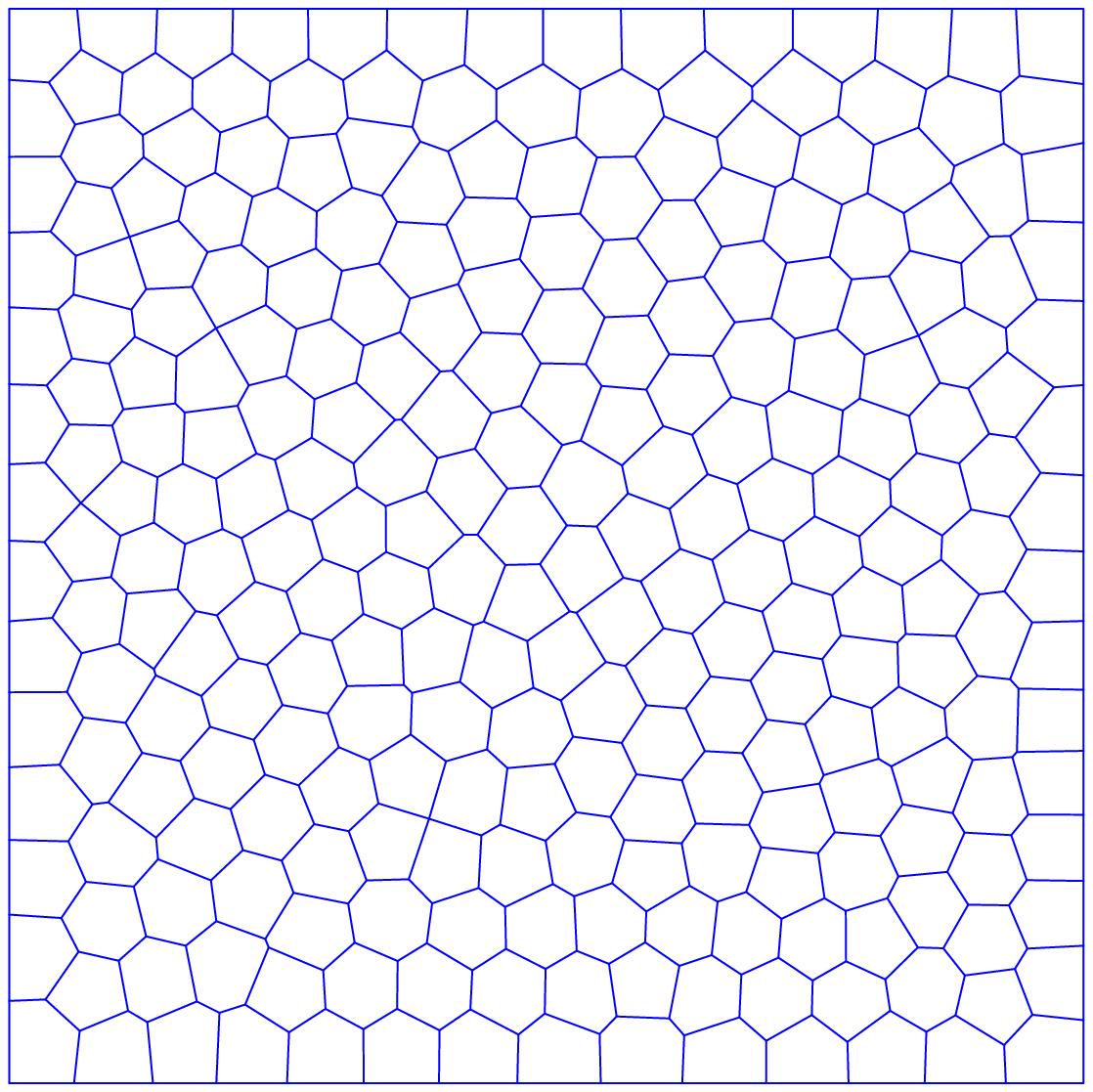}
\caption{Sequence of Voronoi mesh with $0.05$.}
\label{fig:2}
\end{figure}

\begin{test}
\label{test1}

We consider  problem  \eqref{eq:ondesym} with the material tensor $\K = I_2$ and the load term $f = \frac{1 - 2\pi^2}{2}u^2$, where the initial data $u_0$ and $v_0$  are chosen in accordance with the exact solution 
\begin{equation}
\label{soluzione1}
u(t,x_1, x_2) = \sin(t) \sin(\pi x_1)\sin(\pi x_2).
\end{equation}
We implement the fully discrete problem in the time interval $[0,1]$  with the SIM coupled with the MFD discretization for  the sequence of polygonal meshes introduced above. In  Table \ref{tab::1} we choose $\tau=0.001$ and we show the errors in the solution $E_{h, \tau}$, for the Hamiltonian $\sigma_{h, \tau}$ and for the Energy $\epsilon_{h, \tau}$ for different values of the mesh size $h$.

\begin{table}[!h]
\centering
\caption{$E_{h, \tau}$, $\delta_{h, \tau}$ and $\epsilon_{h, \tau}$ for fixed time step size $\tau=0.001$.}
\begin{tabular}{l*{4}{c}}
\toprule
& $h= 0.2$        & $h=0.1$      & $h=0.05$       & $h=0.025$       \\
\midrule
\multirow{1}*{$E_{h, \tau}$}
& $7.2713323e-01$ &  $1.9846010e-01$ &  $5.2502301e-02$ & $1.3086316e-02$\\
\multirow{1}*{$\sigma_{h, \tau}$}
& $7.5726618e-03$ &  $1.9485290e-03$ &  $5.0100939e-04$ & $1.2559774e-04$\\
\multirow{1}*{$\epsilon_{h, \tau}$}
& $2.6233230e-01$ &  $1.4264664e-02$ &  $1.5776230e-03$ & $4.1846647e-04$\\
\bottomrule
\end{tabular}
\label{tab:1}
\end{table}

In  Figure \ref{fig:3} we plot the asymptotic behaviour of the errors in the solution and Hamiltonian as a function of $h$, in accordance with the theoretical order of convergence $h^2$.
\begin{figure}[!h]
\centering
\includegraphics[scale=0.25]{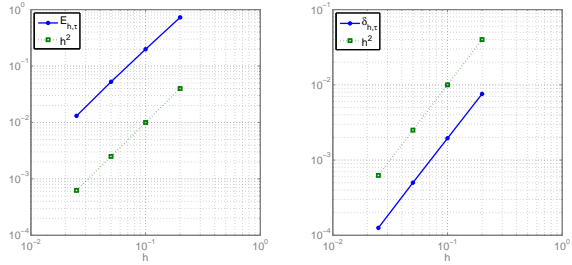}
\caption{Asymptotic behaviour of $E_{h,\tau}$ and $\delta_{h,\tau}$ as a function of $h$ for $\tau=0.001$.}
\label{fig:3}
\end{figure}

In  Figure \ref{fig:4} we plot the asymptotic behaviour of the errors in the Energy density conservation law $\epsilon_{h, \tau}$ at the final step $N$ as a function of $h$ for $\tau=h$. We observe that, using  \eqref{eq:discrete energy conservation}, we expect an order $h^4$ of convergence. 
\begin{figure}[!h]
\centering
\includegraphics[scale=0.25]{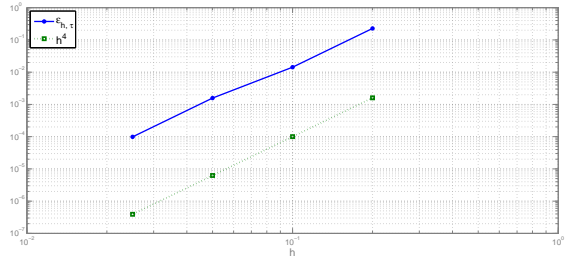}
\caption{Asymptotic behaviour of $\epsilon_{h,\tau}$ as a function of $h$ for $\tau=h$.}
\label{fig:4}
\end{figure}

In  Figure \ref{fig:5} we extend the time interval setting $T=100$ and  we show the behaviour of the error $\delta_{h,\tau}$ in the discrete Hamiltonian functional along the sequence $(u_h^n, v_h^n)$ with respect to the initial value  $(u_h^0, v_h^0)$ for $h=0.05$ and $\tau=0.001$. We can observe that the Hamiltonian is numerically preserved by SIM. This results is in accordance with  Remark \ref{oss:esatta}, indeed in the test we are considering a quadratic function $f(u)$.

\begin{figure}[!h]
\centering
\includegraphics[scale=0.25]{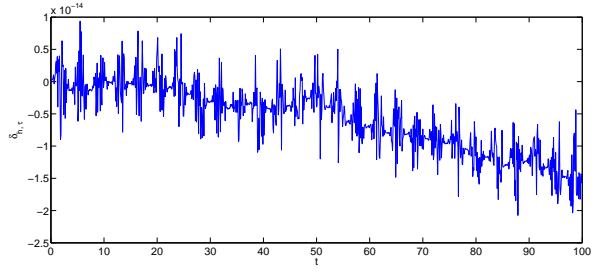}
\caption{Behaviour of discrete Hamiltonian functional along the sequence $(u_h^n, v_h^n)$ with $h=0.05$ and $\tau=0.001$.}
\label{fig:5}
\end{figure}
In  Figure \ref{fig:6} we consider as before $T=100$ and  we plot the evolution of the error in Energy conservation law along the discrete solution with $h=0.05$ and $\tau=0.001$.
\begin{figure}[!h]
\centering
\includegraphics[scale=0.25]{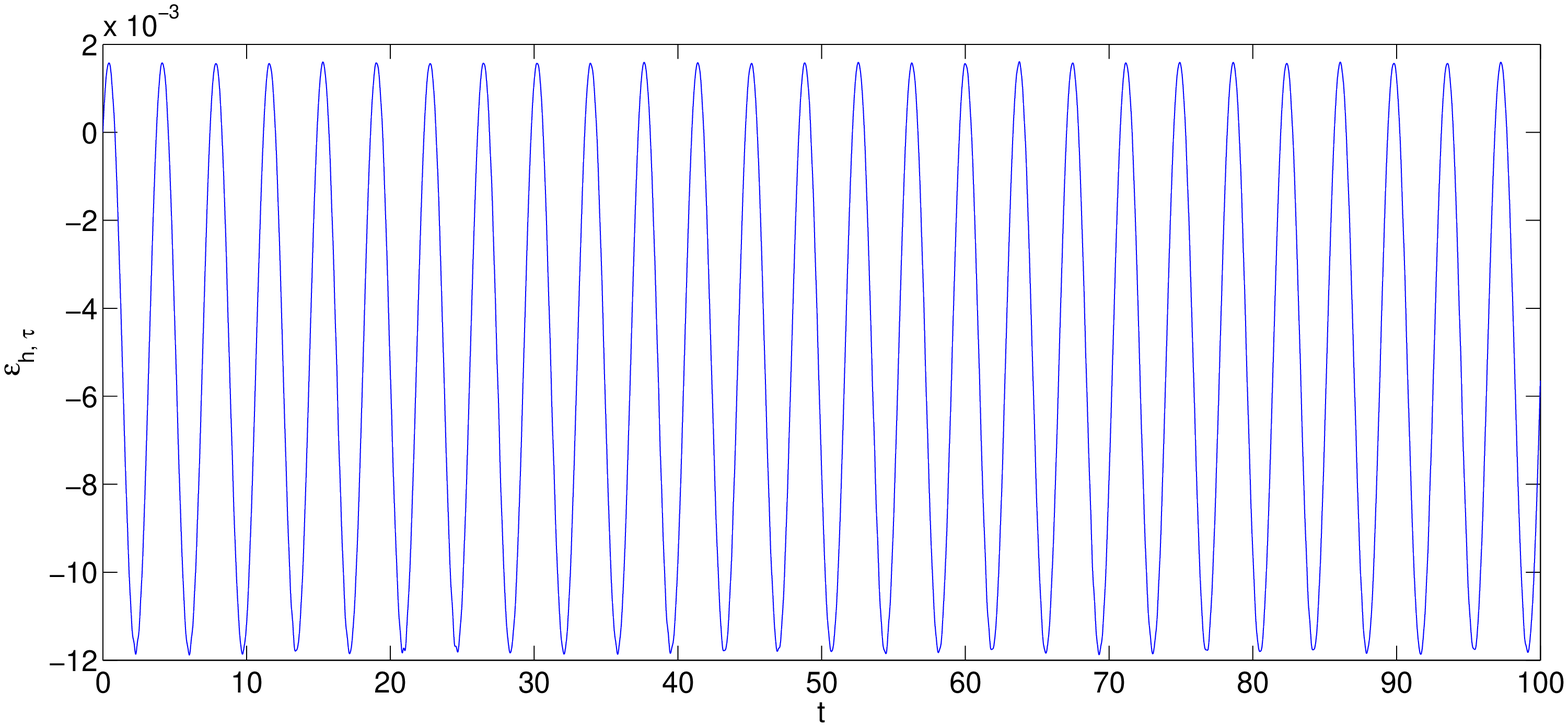}
\caption{Behaviour of Energy conservation law error along the sequence $(u_h^n, v_h^n)$ with $h=0.05$ and $\tau=0.001$.}
\label{fig:6}
\end{figure}

\end{test}

\begin{test}
\label{test2}
We consider problem  \eqref{eq:ondesym} with  material tensor,  load and initial data  given by
\begin{displaymath}
\K = I_2, \qquad f(u) = \sin(u), \qquad u_0(x) = 0, \qquad v_0(x) = \sin(\pi x_1) \sin(\pi x_2).
\end{displaymath}
We implement the fully discrete problem in the time interval $[0,1]$  with the SIM coupled with the MFD discretization for  the usual sequence of polygonal meshes introduced above. In  Table \ref{tab::2} we choose $\tau=0.001$ and we show the errors for the Hamiltonian $\sigma_{h, \tau}$ and for the Energy $\epsilon_{h, \tau}$ for different values of the mesh size $h$. We observe that we achieve the theoretical order $h^2$ of convergence.  In  Table \ref{tab::3} we fix the mesh size $h=0.05$ and we display the errors for the Hamiltonian and for the Energy as a function of $\tau$. In this case we observe that the error for the Hamiltonian is almost constant in $\tau$: the error due to the spatial discretization dominates the time component of the error.

\begin{table}[!h]
\centering
\caption{$\delta_{h, \tau}$ and $\epsilon_{h, \tau}$ for fixed time step size $\tau=0.001$.}
\begin{tabular}{l*{4}{c}}
\toprule
& $h= 0.2$        & $h=0.1$      & $h=0.05$       & $h=0.025$       \\
\midrule
\multirow{1}*{$\sigma_{h, \tau}$}
& $7.5726593e-03$ &  $1.9485268e-03$ &  $5.0100734e-04$ & $1.2486786e-04$\\
\multirow{1}*{$\epsilon_{h, \tau}$}
& $2.2022797e-02$ &  $7.5684283e-03$ &  $1.3466833e-03$ & $3.3563875e-04$\\
\bottomrule
\end{tabular}
\label{tab:2}
\end{table}

\begin{table}[!h]
\centering
\caption{$\delta_{h, \tau}$ and $\epsilon_{h, \tau}$ for fixed mesh diameter $h=0.05$.}
\begin{tabular}{l*{4}{c}}
\toprule
& $\tau= 0.1$        & $\tau=0.05$      & $\tau=0.025$       & $\tau=0.0125$       \\
\midrule
\multirow{1}*{$\sigma_{h, \tau}$}
& $4.8131147e-04$ &  $4.9593346e-04$ &  $4.997307e-04$ & $5.0068913e-04$\\
\multirow{1}*{$\epsilon_{h, \tau}$}
& $2.2022797e-02$ &  $7.5684283e-03$ &  $1.3466833e-03$ & $3.3563875e-04$\\
\bottomrule
\end{tabular}
\label{tab:3}
\end{table}

In  Figure \ref{fig:7} we consider  a larger final time $T=100$ and  we plot the behaviour of the error $\delta_{h,\tau}$ in the discrete Hamiltonian functional along the sequence $(u_h^n, v_h^n)$ with respect to the initial value  $(u_h^0, v_h^0)$ for $h=0.05$ and $\tau=0.001$.

\begin{figure}[!h]
\centering
\includegraphics[scale=0.25]{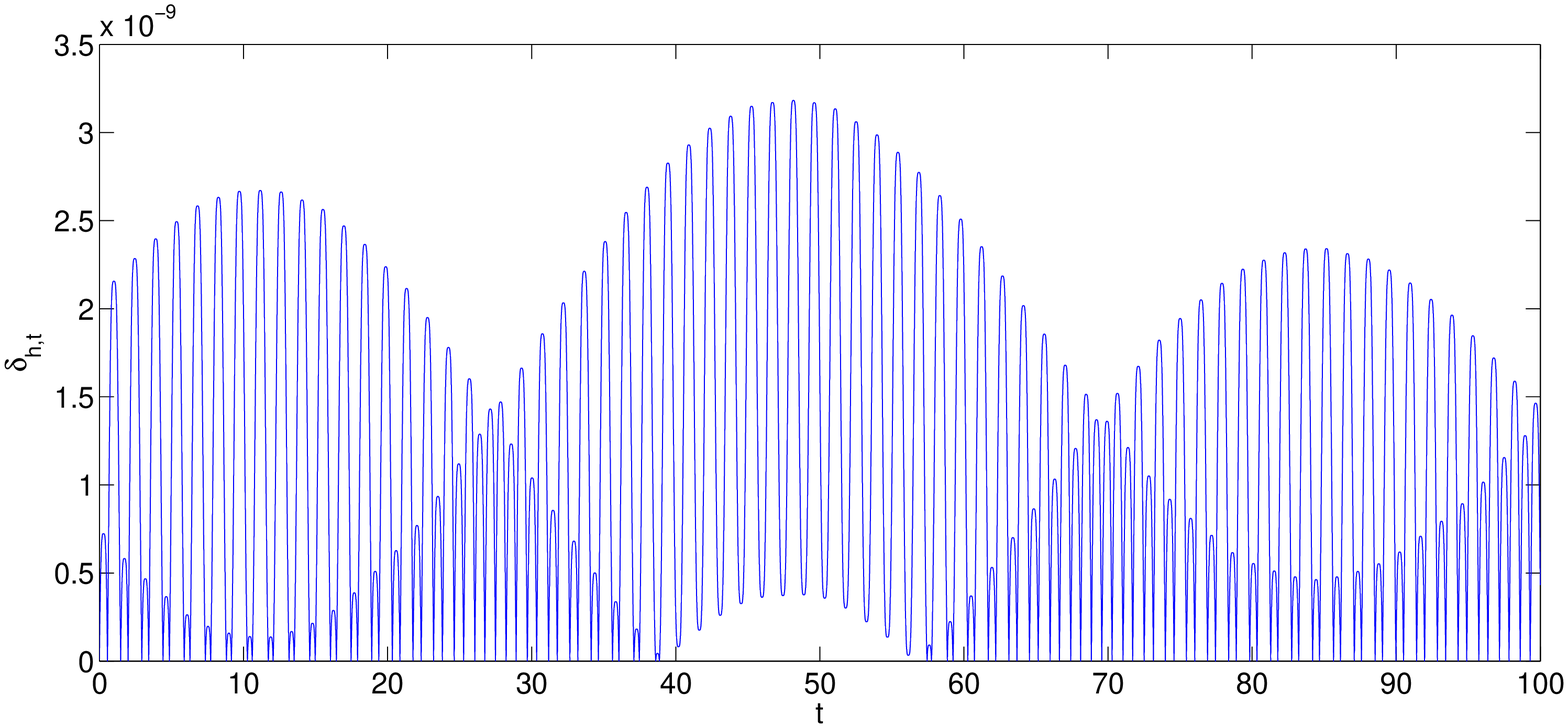}
\caption{Behaviour of discrete Hamiltonian functional along the sequence $(u_h^n, v_h^n)$ with $h=0.05$ and $\tau=0.001$.}
\label{fig:7}
\end{figure}

In  Figure \ref{fig:8} we consider again $T=100$ and  we show the evolution of the error in Energy conservation law along the discrete solution with $h=0.05$ and $\tau=0.001$.

\begin{figure}[!h]
\centering
\includegraphics[scale=0.25]{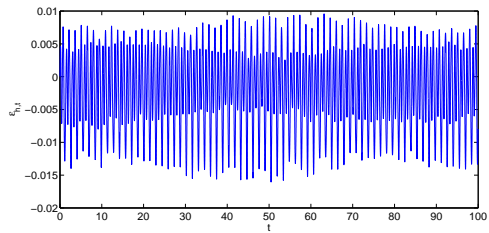}
\caption{Behaviour of Energy conservation law error along the sequence $(u_h^n, v_h^n)$ with $h=0.05$ and $\tau=0.001$.}
\label{fig:8}
\end{figure}

\end{test}


\section{Conclusions}
In this paper we have analysed  the structure and the time invariants of the nonlinear wave equation discretized by mimetic approach. We have proved that the MFD discretization preserves the hamiltonian formulation of the problem and that the Hamiltonian and the Energy are still semi-discrete invariants of the solution. We have also derived a convergence theory for the method, obtaining an $h^2$ order for the $L^2$ discrete norm of the error among the solution of the continuous  and discrete problems. We have then considered the fully discrete scheme by making use of the MFD method coupled with the SIM time integrator: we have derived the convergence rate of the method
and we have investigate the behaviour the  Hamitonian and Energy. 
In light of these results we belive that the spatial discretization by making use of the MFD technique
is a good choice in the context PDEs with conservation laws.
In the present manuscript the focus is on the spatial discretization, thus
the use of the mid-point scheme, for the time discretization, should
be simply understood as a model symplectic method. 

\section{Acknowledgements}
The author Giuseppe Vacca wishes to thank the National Group of Scientific Computing (GNCS-INDAM) that through the project "Finanziamento Giovani Ricercatori 2016" has supported this research.
This paper has been partially supported by GNCS--INDAM.

\addcontentsline{toc}{section}{\refname}
\bibliographystyle{plain}
\bibliography{biblio}
\end{document}